 \newtheorem{theorem}{Theorem}[section]
 \newtheorem{lemma}[theorem]{Lemma}
 \newtheorem{corollary}[theorem]{Corollary}
 \newenvironment{proof}{{\it Proof}. }{\hfill\END\\[0.5ex]}
\newcommand{\igg}[1]{{{#1}}}
\newcommand{\ig}[1]{{{#1}}}
\newcommand{\R}{\mathbb{R}}
\newcommand{\be}{\begin{equation}}
\newcommand{\ee}{\end{equation}}
\newcommand{\cO}{\mathcal{O}}
 \makeatletter\@addtoreset{equation}{section}\makeatother
\newcommand{\rd}{\mathrm{d}}
\newcommand{\ri}{\mathrm{i}}
\newcommand{\wx}{\widehat{x}}
\newcommand{\wF}{\widehat{F}}
\newcommand{\END}{\hfill$\Box$}
\newenvironment{remark}%
{\refstepcounter{theorem}\noindent%
\textbf{Remark \thetheorem.}}%
{\hfill $\Box$ \\[0.75ex]}
\title{Filon-Clenshaw-Curtis rules
  for highly-oscillatory integrals with algebraic singularities and
  stationary points}
\author{V. Dom\'inguez\thanks{
Dep. Ingenier\'{\i}a Matem\'{a}tica e Inform\'{a}tica,
E.T.S.I.I.T.
Universidad P\'{u}blica de Navarra.
Campus de Tudela
31500 - Tudela (SPAIN), {email}:{\tt victor.dominguez@unavarra.es}} \and  I.G.
Graham\thanks{Department of Mathematical Sciences, University of Bath, Bath, BA2
7AY, United
Kingdom.
E-mail: {\tt I.G.Graham@bath.ac.uk}} \and T. Kim \thanks{Department of
Mathematical Sciences, University of Bath, Bath, BA2 7AY, United
Kingdom.
E-mail: {\tt T.Kim@bath.ac.uk}}}
\begin{document}

\maketitle

\begin{abstract}
In this paper we propose and analyse composite Filon-Clenshaw-Curtis quadrature
rules for integrals of the form $I_{k}^{[a,b]}(f,g) := \int_a^b f(x)
\exp(\mathrm{i}kg(x)) \rd x $,  where $k \geq  0$, 
$f$ may have integrable
singularities and $g$ may have  stationary points.   Our
composite rule is defined on a mesh with $M$ subintervals and requires $MN+1$
evaluations of $f$. It satisfies  an error estimate of the form $C_N
k^{-r} M^{-N-1 + r}$, where $r$ is determined by  the strength of any
singularity in $f$ and the order of any stationary points  in $g$ and  $C_N$ is
a constant which is independent of $k$ and $M$, but  depends on $N$. The
regularity  requirements on $f$ and $g$ are explicit in the error estimates. For
fixed $k$, the rate of convergence of the rule as $M \rightarrow \infty$   is
the same as would be obtained if $f$ was smooth.      Moreover, the quadrature
error decays   at least as fast as $k \rightarrow \infty$ as does   the original
integral $I_{k}^{[a,b]}(f,g)$.  For the case of nonlinear oscillators $g$,   
the algorithm requires the evaluation of   $g^{-1}$ at non-stationary points.
Numerical results demonstrate the sharpness of the theory. An  application  
to the implementation of boundary integral methods for the
high-frequency Helmholtz equation is given.  
\end{abstract}
{\bf Keywords} Oscillatory integrals, 
Clenshaw-Curtis quadrature,  Integrable singularities, 
Stationary points, Graded meshes.\\
{\bf MSC2010}:  65D30,  65Y20,  42A15, {74J20} 
 
\section{Introduction}
Oscillatory integrals of the form
\begin{equation}
\label{eq:theproblem}
 I_k^{[a,b]} (f,g) : = \int_{a}^b f(x)\exp({\rm i}kg(x))\ {\rm d}x  
\end{equation}
where  $f \in L^1[a,b]$ and {$k>0$} regularly appear in
applications. If $f$ and $g$ are smooth and $g'$ does not vanish  then
$ I_k^{[a,b]} (f,g) $  decays with at least $\mathcal{O}(k^{-1})$ 
as $k \rightarrow \infty$. The decay is faster than $\mathcal{O}(k^{-1})$ if $f$ and some 
of its derivatives vanish at both  end-points  $a,b$,  but    
is generally  slower 
if $f$ has a singularity or if   $g$ has a stationary point in $[a,b]$.  In practice one may be  interested in computing  \eqref{eq:theproblem}  efficiently and to controllable accuracy   
for a range of values of $k$ and for quite general  $f$ and $g$. 
 The {purpose} of this paper is to provide stable quadrature rules  for this task  
and to prove error estimates demonstrating the quality of the methods. We also  demonstrate the efficiency of our rules by applying them to  an example coming from boundary integral methods for  
high-frequency wave  scattering.    

Restricting first to the case  $g(x) = x$,  a recent paper   
\cite{DoGrSm:11}  studied the convergence of
Filon-Clenshaw-Curtis (FCC) rules for computing  the special case of 
\eqref{eq:theproblem}:
\begin{equation}
\label{eq:partcase}
 I_k (f) : = \int_{-1}^1 f(x)\exp({\rm i}kx)\ {\rm d}x\ .   
\end{equation}
These rules (denoted by $I_{k,N}(f)$),   approximate
\eqref{eq:partcase} (when $k\geq1/2$)
by replacing $f$ by its  
polynomial interpolant of degree $N$ at the Clenshaw-Curtis (or Chebyshev)  
points 
$t_{j,N} = \cos (jN/\pi)$, $j = 0, \ldots , N$. (When  $k<1/2$,  standard Clenshaw Curtis rules are used instead.)
A stability theory is given in \cite{DoGrSm:11} and a 
slight extension of the error estimates given in  \cite{DoGrSm:11}   (see Theorem
\ref{prop:convRule} below)   shows that, 
for  $r \in [0,2]$ and $m > \max\{ \frac12, \rho(r)\}$, 
\begin{equation}   \label{eq:1error} |I_k(f)  -I_{k,N} (f)  |\ \le \  C   
\left(\frac{1}{k}\right)^r  \left(\frac{1}{N}\right)^{m-\rho(r)}\|f_c\|_{H^{m}}\
,   \quad N \geq 1\ ,
\end{equation}
where $f_c(\theta) = f(\cos \theta)$, \ $ \rho(r) = r ,$    
$ r \in [0,1]$,  $ \rho(r) = 5r/2 -
3/2$,    $r
    \in [1,2]$ and $\Vert \cdot \Vert_{H^m}$ denotes the norm of the
    Sobolev space of order $m$ on $[-\pi, \pi]$. 
Thus  fast  convergence of the rule with respect to $N$ and decay of
the error  with order  
up to $\mathcal{O}(k^{-2})$ is obtained
if  $f$ is sufficiently regular.

However the  convergence rate  of the FCC rule is significantly impaired when
$f$ has one or more (integrable) singularities. Thus  in this paper we consider
composite  rules for \eqref{eq:theproblem} (first for $g(x) = x$),  obtained by
subdividing $[a,b]$ into a mesh with $M$ subintervals, chosen so that any 
singular points of $f$ coincide with mesh points.  We then construct  a
composite rule which uses the FCC rule on each   mesh subinterval not containing
 the singularities, and, on subintervals
containing the singularities, either  zero or  a very simple  two-point
rule is used, depending on the strength of the singularity. 
(See Section \ref{section:algorithm} for precise
description of the algorithm.)  To give a flavour of our  results,  
we show, for example,  that if  $f$ has  a singularity of form $\vert x - x_0
\vert^{\beta}$ for $x_0 \in [a,b]$ and $\beta \in (-1,1)\backslash \{0\}$, then
with suitable mesh refinement 
near $x_0$,   our rules have  error $E(f)$ which  satisfies the estimate
(see Theorem  \ref{thm:beta_all}):  
 \be \label{eq:sample_graded}
   E(f) \ \leq\   C_N
   \, \left(\frac{1}{k}\right)^r 
\, \left(\frac{1}{M}\right)^{N+1-r} \, \left\|f\right\|,
\ee
where $r \in [0, 1+ \beta]$ and the norm on $f$ is an appropriate weighted norm 
which takes  into account the singularity at $x_0$.  
The
estimate \eqref{eq:sample_graded} decays at least as fast with $k$  as does  the
corresponding integral \eqref{eq:theproblem}, since the latter  decays in
general  with  $\cO(k^{-r})$ where  $r =  \min\{1+\beta , 1\}$ -- see 
Lemmas \ref{lemma:betaNeg} -  \ref{remark:exact_integrals}. 



In order to prove   \eqref{eq:sample_graded}  (and its generalisations), in this
paper  a non-trivial extension of the estimate (\ref{eq:1error}) (quoted from
\cite{DoGrSm:11}) is first    obtained  in \S \ref{sec:CleCur}. Since the error
estimate \eqref{eq:1error} depends on the regularity of $f$ through the norm of
$f_c$ this estimate does not provide the correct scaling with respect to $h$
when it is transported to an interval of size $h$. Therefore in Theorem  
\ref{thm:filon-cc} we prove a variant of \eqref{eq:1error},  where $\Vert f_c
\Vert_{H^m}$ is replaced by the Chebyshev weighted norm of $f^{(m)}$. This new
estimate has the correct scaling behaviour,   as is shown in Theorem
\ref{th:FCC_ab}. In \S \ref{sec:Composite} we obtain the error analysis for the
composite FCC rule applied to \eqref{eq:theproblem} with  $g(x) = x$ when   $f$
has integrable singularities at a finite set of points, in particular  obtaining
error estimates of the form \eqref{eq:sample_graded}. In  \S
\ref{sec:Composite_Stationary} we further extend to the case where $g$ may have
a finite number of stationary points in $[a,b]$. The latter case can be reduced
to that studied in \S \ref{sec:Composite} provided we assume that the inverse of
$g$ is known (\ig{or is evaluated numerically}) 
on subintervals between  stationary points, and indeed the action
of $g^{-1}$ is required for the implementation of the algorithm. 
In \S
\ref{sec:Numerical} we give numerical experiments, \ig{utilising the
public domain code \cite{Do:code}}   
 which  indicate that as $M$ increases,  
the error decays with  $\cO(M^{-N-1})$,  provided the  parameters 
of the mesh are appropriately chosen, relative to the regularity of
$f$ and 
stationary points in $g$. Moreover, when $k$ increases the error decays roughly  
with  $\cO(k^{-r})$, where $r\in [0,1+\beta]$ indicating the sharpness of 
our theory. The numerical experiments also indicate  that applying 
the composite FCC on a graded mesh to integrals with singularities yields 
much more accurate results than applying FCC globally, and using the same  number of 
integrand evaluations.

In this paper we restrict our error estimates to  the case of  $k >0 $ for
convenience only; the rules also work well for all $k \in \mathbb{R}$ and the
error estimates can be easily  extended to that case (see, e.g., \cite[Corollary
2.3]{DoGrSm:11}). As is also shown in \cite{DoGrSm:11},  the FCC rules for
\eqref{eq:partcase} have a stable  implementation for  all $k$ and $N$ which,
via FFT, costs  $\cO(N\log N) $ operations. The composite rules presented here
require the evaluation of $f$ at $MN+1$ points. 

Although  oscillatory integration is  well-studied in the 
classical literature,  
some problems of interest to numerical 
analysts (even in 1D) still remain unsolved today. Thus this field has 
enjoyed   a recent upsurge of interest, partly because of its
importance in wave scattering applications.   (See \cite{ChGrLaSp:12}
and \cite{DoGrSm:11} for some  more detailed historical remarks.)
In particular,  the  construction and analysis of Filon-type  
methods has been examined  
in Iserles \cite{Is:04, Is:05}, Iserles and N\o rset \cite{IsNo:05}, 
Olver \cite{Ol:07}, Xiang \cite{xi:07} and Huybrechs and Olver \cite{HuOl:12}.
(Other  related methods include those of Levin-type  
\cite{Le:82,Ol:06,Ol:09} and those using numerical steepest descent 
\cite{HuVa:06}.)
In all these references, however,  the analysis  concentrates on 
accelerating the convergence as $k \rightarrow \infty$,  
generally assuming either that $f$ is sufficiently regular or $f$ has a 
particular type of singularity so that the moments 
(i.e. integrals \eqref{eq:theproblem} where $f$ is replaced by polynomials) 
can be computed using special functions. By contrast,  we propose 
Filon-type method for computing \eqref{eq:theproblem} 
where $f$ has algebraic singularities and $g$ may have 
stationary points and where the moments  can be obtained 
readily. Our method converges superalgebraically with respect to 
the number of quadrature points for any strength of singularity,  
provided the parameters of the mesh are chosen appropriately,  
and also converges with respect to $k$ at least as fast the 
integral itself converges to zero as $k \rightarrow \infty$.  
Our error estimates explicitly indicate the 
regularity requirements on $f$ and $g$. Other papers  
\cite{Me:09} and \cite{DoGrSm:11} provide analogous estimates for pure 
(non-composite) Filon rules, where $f$ and $g$ are  sufficiently regular
(and $g'\not=0$ ).  
But apart from these we know of no other 
contributions in this direction.  


Finally we mention  that our methods add something to traditional asymptotic
methods. 
The method of  stationary phase produces an accurate approximation  to an
integral if $k$ is sufficiently large, whereas our  methods work for all $k$ and
are superalgebraically convergent with respect to the number of function evaluations. Our methods  also yield 
a relative error which is superalgeraically convergent uniformly in $k$ and may
indeed even decay with $k$.

\section{ The Basic Filon Clenshaw-Curtis  Rule}
\label{sec:CleCur}

In this and the next section we will consider only the linear
oscillator $g(x) = x$ in \eqref{eq:theproblem}. (See \S \ref{sec:Composite_Stationary} for the case of nonlinear $g$.) Also, we introduce 
the notation
\begin{equation}
\label{eq:int_not}
I_{k}^{[a,b]}(f) \ := \  \int_{a}^b f(x)\exp({\rm i}kx)\ {\rm d}x \ .
\end{equation}
When $[a,b] = [-1,1]$ we will denote  
\eqref{eq:int_not} simply as $I_k(f)$.

\subsection{Integrals over the fundamental interval $[-1,1]$}

The FCC rule in its simplest form 
approximates   $I_k(f)$, by replacing  $f$ by 
its  algebraic polynomial  interpolant $Q_Nf$ at 
the Clenshaw-Curtis points $t_{j,N} := \cos(j \pi /N)\ , \  j = 0,
\ldots , N $ where $N \geq 1$. 
{Then for $k\ge 1/2$,  the rule is}  
 \begin{equation}
 \label{eq:therule}
 I_{k,N} (f) \ := \ \int_{-1}^1
 (Q_Nf)(x)\exp({\rm i}k x)\ {\rm d}x \ = \ \sum_{n=0}^{N}{}''  \
 \alpha_{n,N}(f)  \omega_n(k) \ , 
 \end{equation}
where, for $n \geq 0$,\quad 
$\displaystyle{\omega_n(k):=\int_{-1}^1 T_n(x)\exp({\rm i}k x)\ {\rm d}x\ , 
\ }$ 
 $T_n(x) = \cos(n \arccos(x))$ is the $n$th Chebyshev polynomial, and 
\begin{equation}
\label{eq:DCT} \alpha_{n,N}(f) \ =\ \frac2{N}{\sum_{j=0}^N}{}''
\ \cos\Big(\frac{j n \pi }N\Big) f\Big( t_{j,N}\Big)\ , \qquad n=0,\ldots, N\ .
\end{equation}
The notation $\sum{}''$ means that the first and last terms in
the sum  are multiplied by $1/2$. 


When $0<k<1/2$ the integrand in \eqref{eq:int_not} 
is non-oscillatory and we then apply the standard Clenshaw-Curtis rule:     
 \begin{equation}
 \label{eq:therule:02}
 I_{k,N} (f) \ := \ \int_{-1}^1
 (Q_N f_k)(x)\ {\rm d}x \ = \ \sum_{n=0}^{N}{}''  \
 \alpha_{n,N}(f_k)  \omega_n(0) \ ,\qquad f_k(x):=f(x)\exp({\rm i}k x)\ . 
 \end{equation}
We point out that $\omega_n(0)=2/(1-n^2)$ if $n$ is even, $0$ otherwise. 
For {$k\geq 1/2 $}, the computation of $\omega_n(k)$ turns out to be more delicate.
However in \cite{DoGrSm:11} a stable and  efficient scheme for computing {these}
weights  is presented. After an initial application of the discrete
cosine transform (via FFT, costing $\mathcal{O}(N \log N)$ operations),  
the rule \eqref{eq:therule}-\eqref{eq:therule:02} can then be applied to any $f$ in 
an additional $\mathcal{O}(N)$ operations - see \cite{DoGrSm:11} for
more detail.    
 In the error analysis in this section, we shall make use of the Sobolev space
$H^m$ of $2 \pi-$periodic functions with the norm 
\begin{equation}
 \label{eq:sobolevnorm}
 \|\varphi\|_{H^m}^2:=|\widehat{\varphi}(0)|^2+\sum_{\mu \ne 0}
|\mu|^{2m}|\widehat{\varphi}(\mu)|^2,\qquad
\widehat{\varphi}(\mu):=\frac{1}{2\pi}\int_{-\pi}^\pi \varphi(\theta)\exp(-{\rm
i}\mu\theta)\ {\rm d}\theta. 
\end{equation}
For   $\phi \in H^0 =  L^2[-\pi,\pi]$   
and  any $J \geq 0$, we introduce the truncated Fourier series:
$$
\left( \mathcal{S}_J \phi \right)(\theta) = \sum_{\mu=-J}^{ J}
\hat{\phi}(\mu) \exp(\ri \mu \theta)   $$
which converges  to $\phi \in H^0$  \text{ as}  
$J \rightarrow \infty$.  
In fact  when   $\phi$ is even,  we have 
\be
\left( \mathcal{S}_J \phi\right)(\theta) := \hat{\phi}(0)+ 2\sum_{\mu=1}^{
J} \hat{\phi}(\mu) \cos(\mu \theta) \ , \quad \text{where} \quad \
\hat{\phi}(\mu) = \frac{1}{\pi} \int_{0}^{\pi}
\phi(\theta) \cos(\mu \theta) d\theta \ , \label{eq:soper}
\ee
where the second  sum is void if $J=0$. 

Introducing the notation $f_c(\theta) = f(\cos \theta)$, and 
$ \rho(r) = r ,$    \ 
$ r \in [0,1]$, and $ \rho(r) = 5r/2 -
3/2$,   \  $r
    \in [1,2]$,  
the  following theorem is then  a minor extension of  
  \cite[Theorem 2.2]{DoGrSm:11}.
\begin{theorem}
\label{prop:convRule}
There exists a constant $C>0 $ such that, for all  $r\in[0,2]$ and  
all integers  $m  > \max\{1/2,\rho(r)\}$, the estimate 
\begin{equation}
\label{eq:cgcethm}
  |I_k(f)  -I_{k,N} (f)  |\ \le \  C
\left(\frac{1}{k}\right)^r\left(\frac{1}{N}\right)^{m-\rho(r)}\|f_c\|_{H^{m}}\
, \quad k \geq 1,  \quad N \geq 1
\end{equation}
holds when $f_c  \in H^m$. 

\end{theorem}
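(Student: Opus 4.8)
The plan is to express the quadrature error as a sum of Chebyshev moments and to control it through two separate ingredients: the decay of the Chebyshev coefficients of $f$ (which carries the regularity of $f_c$) and sharp bounds on the moments $\omega_n(k)$ (which carry the oscillation in $k$ and the loss in $n$). Expanding $f$ in Chebyshev polynomials, $f=\sum_{n\ge0}a_nT_n$ with $a_0$ carrying the usual factor $\tfrac12$, one has $a_n=2\widehat{f_c}(n)$ and $I_k(f)=\sum_{n\ge0}a_n\omega_n(k)$, whereas $I_{k,N}(f)$ replaces each $a_n$, $0\le n\le N$, by the discrete cosine coefficient $\alpha_{n,N}(f)$ of \eqref{eq:DCT}. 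Since $\alpha_{n,N}(f)$ is the aliased sum of the $a_j$, the difference $\alpha_{n,N}(f)-a_n$ involves only high-index coefficients, and the error $I_k(f)-I_{k,N}(f)$ reorganises as $\sum_{j>N}a_j\,\Omega_j(k)$, where each $\Omega_j(k)$ is $\omega_j(k)$ together with one folded moment $\omega_{n(j)}(k)$, with $n(j)\le N$ the reflection of $j$ into $\{0,\dots,N\}$ modulo $2N$. The Sobolev regularity enters only through $\sum_{n}(1+n)^{2m}|a_n|^2\le C\|f_c\|_{H^m}^2$, which follows from \eqref{eq:sobolevnorm}.

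The core is then a family of moment estimates $|\omega_n(k)|\le C\,k^{-r}(1+n)^{\sigma(r)}$ for $k\ge1$, which I would take from (or reprove along the lines of) \cite{DoGrSm:11}. These follow from the representation $\omega_n(k)=\int_0^\pi\cos(n\theta)\exp(\ri k\cos\theta)\sin\theta\,\rd\theta$ by repeated integration by parts in $\theta$: each step gains a factor $k^{-1}$ but differentiates $\cos(n\theta)$, costing a power of $n$, while the phase $k\cos\theta$ is stationary only at $\theta=0,\pi$, where the amplitude $\sin\theta$ conveniently vanishes. The decomposition of $\rho$ into $\rho(r)=r$ on $[0,1]$ and $\rho(r)=5r/2-3/2$ on $[1,2]$, and in particular its fractional slope in the second regime, reflects the transition in this stationary-phase picture as $n$ crosses the resonance $n\approx k$: there the two stationary points coalesce at a turning point, the elementary integration-by-parts bounds degenerate, and an Airy-type estimate with a fractional power of $k$ takes over.

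Inserting these bounds and summing over $j>N$, I would control $\sum_{j>N}|a_j|\,|\Omega_j(k)|$ by Cauchy--Schwarz against $\sum_j(1+j)^{2m}|a_j|^2\le C\|f_c\|_{H^m}^2$; convergence of the residual sum then requires $m>\rho(r)$ and produces the factor $(1/N)^{m-\rho(r)}$, while the common factor $k^{-r}$ comes out uniformly. The main obstacle is exactly the resonant band $n\approx k$: establishing the sharp moment exponent there, and balancing it against the folded aliasing contributions $\omega_{n(j)}(k)$ with $n(j)\le N$, so that the final regularity threshold is precisely $m>\max\{1/2,\rho(r)\}$ uniformly in $k$ and for all real $r\in[0,2]$. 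Since \eqref{eq:cgcethm} is only a minor extension of \cite[Theorem 2.2]{DoGrSm:11}, I would reuse their moment estimates and aliasing bookkeeping and merely carry the summation through the full range of $r$.

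As a transparent verification of the integer cases, one may instead integrate $I_k(f-Q_Nf)$ by parts directly in $x$. Because $t_{0,N}=1$ and $t_{N,N}=-1$ are interpolation nodes, $f-Q_Nf$ vanishes at $\pm1$, so after one integration by parts the boundary term drops and $|I_k(f-Q_Nf)|\le k^{-1}\|(f-Q_Nf)'\|_{L^1}$; transporting to $\theta$, the weight $\sin\theta$ cancels the Jacobian, and the $H^0$ and $H^1$ interpolation-error estimates of $(f-Q_Nf)_c$ give exactly $\rho(0)=0$ and $\rho(1)=1$. A second integration by parts, however, generates endpoint-derivative terms and the weight $(1-x^2)^{-1/2}$ from $x=\cos\theta$, which is where the fractional loss at $r=2$ originates and which confirms why $r=2$ is the delicate case. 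This route recovers the rates at integer $r$ but is lossy in the regularity requirement for intermediate $r$, so the moment-sum route above is the one I would push to completion.
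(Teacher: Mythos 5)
Your proposal has a genuine gap, and it sits exactly where you flag ``the main obstacle'': the sharp moment estimates $|\omega_n(k)|\le C\,k^{-r}(1+n)^{\sigma(r)}$ for \emph{fractional} $r\in(0,1)\cup(1,2)$, with the precise resonant-band exponent that would deliver $\rho(r)=5r/2-3/2$ on $[1,2]$. These are not available in \cite{DoGrSm:11}, which proves the theorem only for $r=0,1,2$, and your plan to ``reuse their moment estimates and aliasing bookkeeping and merely carry the summation through the full range of $r$'' therefore defers, rather than resolves, the entire content of the extension. The Airy/turning-point heuristic for $n\approx k$ is qualitatively right but is not turned into a bound, and establishing it rigorously for a continuum of $r$ would amount to redoing the hardest analysis of the cited paper. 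A secondary issue: even at the integer cases, plain Cauchy--Schwarz of $\sum_{j>N}|a_j|\,|\Omega_j(k)|$ against the $H^m$ weights loses half a power of $N$ (at $r=0$ it yields $N^{1/2-m}$, not the claimed $N^{-m}$ with $\rho(0)=0$), so the summation scheme as sketched would also need the decay of $\omega_j(k)$ in $j$ to be threaded through the aliasing sum --- further evidence that this route is heavier than the statement requires.

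The paper's proof avoids all of this with a two-line device you missed: since the error is a single nonnegative number $E=|I_k(f)-I_{k,N}(f)|$, one writes $E=E^{\theta}E^{1-\theta}$ for $\theta\in[0,1]$, bounds the first factor by the known $r=1$ estimate and the second by the known $r=2$ estimate (both from \cite[Theorem 2.2]{DoGrSm:11}, valid for $k\ge 1/2$, and both carrying the \emph{same} factor $\|f_c\|_{H^m}$, so the norms recombine), and sets $\theta=2-r$. The exponents interpolate linearly: $(1/k)^{\theta}(1/k)^{2(1-\theta)}=(1/k)^{2-\theta}$ and $(1/N)^{\theta(m-1)}(1/N)^{(1-\theta)(m-7/2)}=(1/N)^{m-(5r/2-3/2)}$, which is exactly why $\rho$ is the linear interpolant of $(1,1)$ and $(2,7/2)$ --- the fractional slope $5/2$ is an artifact of interpolation between integer cases, not of an Airy-type moment estimate. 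The same trick between $r=0$ and $r=1$ handles $[0,1]$, and the $k<1/2$ (classical Clenshaw--Curtis) regime is dispatched separately since $k$ is then bounded. So no new moment analysis, no resonant-band estimate, and no re-derivation of the aliasing identity is needed; your proposal could be repaired instantly by replacing its entire second half with this scalar interpolation of the already-proved integer-$r$ bounds.
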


\begin{proof}
In  \cite{DoGrSm:11} the estimate \eqref{eq:cgcethm} 
was obtained for {$k\ge 1/2$},  for
$r = 0,1,2$. Hence for any $\theta  \in [0,1]$ we can write 
\begin{equation}\label{eq:interp} \vert I_k(f) - I_{k,N}(f)\vert \  = \ \vert I_k(f) -
I_{k,N}(f)\vert^{\theta}  \vert I_k(f) -
I_{k,N}(f)\vert^{1-\theta}\ . \end{equation} Then using estimate \eqref{eq:cgcethm}
with $r = 1$ (respectively $ r = 2$)  to bound the first (respectively
second) factor  on the right hand-side of \eqref{eq:interp} we obtain   
$$ \vert I_k(f) - I_{k,N}(f)\vert \ \leq \ C \
\left(\frac{1}{k}\right)^{2- \theta} 
\left(\frac{1}{N}\right)^{m- 7/2 + 5 \theta/2}\|f_c\|_{H^{m}}\ .
$$
Then setting  $\theta = 2 - r$ we obtain \eqref{eq:cgcethm} for $r
\in [1,2]$ and for $k \geq 1$. An even simpler interpolation  argument obtains the estimate for
$r \in [0,1]$. For $k<1/2$, i.e., for the classical Clenshaw-Curtis rule,  
the case $r=0$ follows by the same arguments used in  \cite[Theorem 2.2]{DoGrSm:11}  to prove \eqref{eq:cgcethm}. For $r\in (0,2)$ the result is obvious since $k$ is bounded.
 \end{proof} 

Theorem \ref{prop:convRule} ensures arbitrarily high convergence  
for the FCC rule  as   $N \rightarrow \infty$, 
provided $f$ is sufficiently
smooth. When  $f$ is not
smooth it is better to  apply the FCC rule in a composite
fashion on meshes graded suitably towards the singular point(s). 
These composite
rules then typically have fixed $N$ and converge as  the 
subinterval size  shrinks to zero. 
  In order to obtain good error estimates for the  composite rules we need to modify
the error estimate  in Theorem \ref{prop:convRule} so that  derivatives of $f$
rather than derivatives of $f_c$ appear in the bound. This will be done in
Theorem \ref{thm:filon-cc}, which in turn is used to obtain Theorem
\ref{th:FCC_ab},  showing  how the error of the FCC rule, when applied on
an arbitrary interval,  depends on the length of the interval.   In
order to prove Theorem \ref{thm:filon-cc} we first need two  lemmas. 

\begin{lemma} \label{lemma:ftprop} Let $f$
be such that $(f')_c \in L^1[-\pi, \pi]$. 
 Then  
\be\widehat{f_c}(\mu) =
\frac{1}{2\mu}
\left[\widehat{(f')_c}(\mu-1)-\widehat{(f')_c}(\mu+1)\right] \ , \quad
\text{for} \quad  \mu \neq 0. 
\label{eq:fourierprop}\ee
\end{lemma}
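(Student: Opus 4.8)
The plan is to establish the recurrence \eqref{eq:fourierprop} by exploiting the fact that differentiation of $f_c$ with respect to $\theta$ is governed by the chain rule together with the elementary trigonometric identity $2\cos(\mu\theta)\sin\theta = \sin((\mu+1)\theta) - \sin((\mu-1)\theta)$. The starting point is the observation that $f_c(\theta) = f(\cos\theta)$ is even and $2\pi$-periodic, so its Fourier coefficients $\widehat{f_c}(\mu)$ are real and satisfy $\widehat{f_c}(\mu) = \widehat{f_c}(-\mu)$, and it suffices to treat $\mu > 0$. Differentiating gives $(f_c)'(\theta) = -f'(\cos\theta)\sin\theta = -(f')_c(\theta)\sin\theta$, an identity that holds pointwise almost everywhere under the hypothesis $(f')_c \in L^1[-\pi,\pi]$, and which moreover makes $(f_c)'$ integrable since $|\sin\theta| \le 1$.

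First I would compute $\widehat{f_c}(\mu)$ by integration by parts. Using \eqref{eq:sobolevnorm}, for $\mu \neq 0$ one has $\widehat{f_c}(\mu) = \frac{1}{2\pi}\int_{-\pi}^{\pi} f_c(\theta)\exp(-\ri\mu\theta)\,\rd\theta$, and integrating by parts (the boundary terms vanishing by periodicity) yields $\widehat{f_c}(\mu) = \frac{1}{\ri\mu}\,\frac{1}{2\pi}\int_{-\pi}^{\pi}(f_c)'(\theta)\exp(-\ri\mu\theta)\,\rd\theta = \frac{1}{\ri\mu}\,\widehat{(f_c)'}(\mu)$. The only subtlety here is justifying the integration by parts when $f_c$ is merely absolutely continuous with $(f_c)' \in L^1$; this is standard, but I would note it explicitly since $f$ itself may be only barely regular.

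Next I would substitute $(f_c)'(\theta) = -(f')_c(\theta)\sin\theta$ and convert the factor $\sin\theta$ into a shift in the Fourier index. Writing $\sin\theta = \frac{1}{2\ri}(\exp(\ri\theta) - \exp(-\ri\theta))$, the coefficient $\widehat{(f_c)'}(\mu)$ becomes $-\frac{1}{2\ri}\left[\widehat{(f')_c}(\mu-1) - \widehat{(f')_c}(\mu+1)\right]$, because multiplication by $\exp(\pm\ri\theta)$ inside the Fourier integral shifts the index by $\mp 1$. Combining this with $\widehat{f_c}(\mu) = \frac{1}{\ri\mu}\widehat{(f_c)'}(\mu)$ produces $\widehat{f_c}(\mu) = \frac{1}{\ri\mu}\cdot\left(-\frac{1}{2\ri}\right)\left[\widehat{(f')_c}(\mu-1) - \widehat{(f')_c}(\mu+1)\right] = \frac{1}{2\mu}\left[\widehat{(f')_c}(\mu-1) - \widehat{(f')_c}(\mu+1)\right]$, which is exactly \eqref{eq:fourierprop}.

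The main obstacle is not the algebra, which is routine, but ensuring that each manipulation is valid under the weak hypothesis $(f')_c \in L^1$ rather than assuming extra smoothness: specifically, that $f_c$ is absolutely continuous so the integration by parts is legitimate, and that all the Fourier integrals appearing are well-defined (which follows since $(f')_c$ and hence $(f_c)'$ lie in $L^1$). I would therefore front-load the regularity bookkeeping and then let the trigonometric identity and index-shift do the rest.
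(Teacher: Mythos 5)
Your proof is correct and follows essentially the same route as the paper's: one integration by parts in $\theta$, the chain-rule identity $(f_c)'(\theta) = -(f')_c(\theta)\sin\theta$, and conversion of the $\sin\theta$ factor into the index shifts $\mu \mp 1$. The only difference is cosmetic — you work with complex exponentials on $[-\pi,\pi]$ and write $\sin\theta = \frac{1}{2\ri}(e^{\ri\theta}-e^{-\ri\theta})$, whereas the paper exploits evenness to use the cosine-coefficient form \eqref{eq:soper} on $[0,\pi]$ with the product-to-sum identity $2\sin\theta\sin(\mu\theta)=\cos((\mu-1)\theta)-\cos((\mu+1)\theta)$.
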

\begin{proof}
Since $f_c$ is even we use \eqref{eq:soper} and 
integrate by parts to obtain
\begin{eqnarray*}\widehat{f_c}(\mu) &=& \frac{1}{\pi} \int_{0}^{\pi}
f_{c}(\theta) \cos(\mu\theta) \rd\theta\  = \  
-\frac{1}{\pi\mu} \int_{0}^{\pi} \left(f_c\right)'
(\theta)\sin(\mu\theta) \rd\theta\\
&=&
\frac{1}{\mu\pi} \int_{0}^{\pi} f'(\cos\theta) \sin\theta
\sin(\mu\theta) \rd\theta\\&=& \frac{1}{2\mu \pi} \int_{0}^{\pi}
\left(f'\right)_c(\theta)\left[\cos( (\mu-1)\theta) - \cos((\mu+1)\theta)\right]
\rd\theta \ ,
\end{eqnarray*} and  the result follows. 
\end{proof}

Using Lemma \ref{lemma:ftprop}, we now
estimate  
the error in the truncated Fourier cosine series of $f_c$.   
\begin{lemma} \label{lemma:ftnorm}
For    all $0\leq m \leq N+1$    
there exist  constants $\sigma_{m,N}>0$ such that 
\be \label{eq:lemma_statement}
\|(I-\mathcal{S}_N)f_c\|_{H^{m}} \ \leq\  
\sigma_{m,N}   \|(f^{(m)})_c\|_{H^{0}}\ . 
\ee
\end{lemma}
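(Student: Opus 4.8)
The plan is to argue entirely on the Fourier side. Since $0\le N$, the operator $\mathcal{S}_N$ retains the constant mode, so $(I-\mathcal{S}_N)f_c$ has Fourier coefficients equal to $\widehat{f_c}(\mu)$ for $|\mu|>N$ and $0$ otherwise; in particular its zeroth coefficient vanishes. Hence, directly from \eqref{eq:sobolevnorm},
\[
\|(I-\mathcal{S}_N)f_c\|_{H^m}^2 \;=\; \sum_{|\mu|>N} |\mu|^{2m}\,|\widehat{f_c}(\mu)|^2 ,
\]
and the whole task reduces to bounding this tail sum by $\sigma_{m,N}^2\sum_{\nu}|\widehat{(f^{(m)})_c}(\nu)|^2=\sigma_{m,N}^2\|(f^{(m)})_c\|_{H^0}^2$. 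The case $m=0$ is immediate with $\sigma_{0,N}=1$, since the tail sum is then a subsum of $\|f_c\|_{H^0}^2$; from now on I assume $1\le m\le N+1$ and that the right-hand side is finite (so that $(f^{(m)})_c\in L^2\subset L^1$, which legitimises the manipulations below).

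The key step is to iterate Lemma \ref{lemma:ftprop} exactly $m$ times. Each application turns a coefficient $\widehat{(f^{(i)})_c}(\nu)$ into a combination of $\widehat{(f^{(i+1)})_c}(\nu\pm1)$ carrying a factor $1/(2\nu)$, so after $m$ steps
\[
\widehat{f_c}(\mu) \;=\; \sum_{\text{paths}} \Big(\prod_{i=0}^{m-1}\frac{\pm1}{2\nu_i}\Big)\,\widehat{(f^{(m)})_c}(\nu_m),
\]
where each path is a walk $\nu_0=\mu$, $\nu_i=\nu_{i-1}\pm1$, so that $\nu_m=\mu+j$ with $j\in\{-m,-m+2,\dots,m\}$ and $|\nu_i-\mu|\le i\le m-1$. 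Because $m\le N+1$ and $|\mu|>N$, every denominator obeys $|\nu_i|\ge |\mu|-(m-1)\ge N-m+2\ge 1$, so the recursion is valid and, crucially, the coefficients are controlled: using $\prod_{i=0}^{m-1}|\nu_i|\ge\prod_{i=0}^{m-1}(|\mu|-i)$ one obtains
\[
|\mu|^m\,\Big|\prod_{i=0}^{m-1}\frac{1}{2\nu_i}\Big| \;\le\; \frac{1}{2^m}\prod_{i=0}^{m-1}\frac{|\mu|}{|\mu|-i}\;\le\;\frac{1}{2^m}\Big(\frac{N+1}{N-m+2}\Big)^m\;=:\;C_{m,N},
\]
the middle ratio being maximised over $|\mu|>N$ at $|\mu|=N+1$. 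This is precisely where the hypothesis $m\le N+1$ is used, and where the dependence of the constant on $N$ (blowing up as $m\to N+1$) comes from; I expect this bookkeeping of the product of denominators to be the only genuinely delicate point.

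With the coefficient bound in hand the rest is routine. There are at most $2^m$ paths grouped into at most $m+1$ distinct endpoints, so the triangle inequality gives $|\mu|^m|\widehat{f_c}(\mu)|\le C_{m,N}\sum_{|j|\le m,\ j\equiv m\,(2)}|\widehat{(f^{(m)})_c}(\mu+j)|$, and Cauchy--Schwarz over the $\le m+1$ values of $j$ yields $|\mu|^{2m}|\widehat{f_c}(\mu)|^2\le C_{m,N}^2(m+1)\sum_j|\widehat{(f^{(m)})_c}(\mu+j)|^2$. Summing over $|\mu|>N$ and interchanging the finite $j$-sum with the $\mu$-sum, each inner sum $\sum_{|\mu|>N}|\widehat{(f^{(m)})_c}(\mu+j)|^2$ is a subsum of $\|(f^{(m)})_c\|_{H^0}^2$, so the total is at most $C_{m,N}^2(m+1)^2\|(f^{(m)})_c\|_{H^0}^2$, giving \eqref{eq:lemma_statement} with $\sigma_{m,N}=C_{m,N}(m+1)$. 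An equivalent and perhaps tidier route is induction on $m$: the single identity $|\mu|\,\widehat{f_c}(\mu)=\tfrac12[\widehat{(f')_c}(\mu-1)-\widehat{(f')_c}(\mu+1)]$, together with $|\mu|/(|\mu|\mp1)\le (N+1)/N$ for $|\mu|>N$, reduces the $H^m$-tail of $f_c$ beyond $N$ to the $H^{m-1}$-tail of $(f')_c$ beyond $N-1$, and the constraint $m\le N+1$ propagates correctly to $m-1\le(N-1)+1$.
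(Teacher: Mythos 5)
Your proof is correct, but your primary route differs from the paper's. The paper never expands coefficients all the way down to order $m$; instead it works at the level of norms, proving the one-step recursion $\|(I-\mathcal{S}_J)f_c\|_{H^m} \le \left(\frac{J+1}{J}\right)^{m-1}\|(I-\mathcal{S}_{J-1})(f')_c\|_{H^{m-1}}$ from Lemma \ref{lemma:ftprop} and iterating it $m-1$ times from $J=N$, with a separate edge case at $m=N+1$ (where the final step uses $\|(I-\mathcal{S}_0)(f^{(m-1)})_c\|_{H^1}\le\|(f^{(m)})_c\|_{H^0}$ because $J=0$ breaks the recursion's form). Your closing sentence sketches exactly this induction; your main argument instead unrolls the coefficient identity $m$ times into a sum over $\pm 1$ walks and estimates path by path. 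Both rest on the same two ingredients — Lemma \ref{lemma:ftprop} and the observation that $|\mu|>N$ together with $m\le N+1$ keeps every denominator $|\nu_i|\ge N-m+2\ge 1$ — and your route has the small advantage of handling $m=N+1$ uniformly, with no edge-case split. One bookkeeping slip: your displayed inequality after "the triangle inequality gives" silently drops the path multiplicities $\binom{m}{(m+j)/2}$ attached to each endpoint $\mu+j$; since these are at most $2^m$ they are absorbed by the factor $2^{-m}$ in $C_{m,N}$, so this costs only a constant, not validity. The trade-off is precisely in the constants: the paper's telescoping product gives $\sigma_{m,N}=\prod_{j=1}^{m-1}\frac{N+1}{N+1-j}$, which tends to $1$ for fixed $m$ as $N\to\infty$ and satisfies $\sigma_{N+1,N}\sim e^{N+1}/\sqrt{2\pi N}$ — facts exploited in the subsequent remark and in the explicit form $\sigma'_{m,N}=C\sigma_{m,N}$ of Theorem \ref{thm:filon-cc} — whereas your cruder bound of order $(m+1)\left(\frac{N+1}{N-m+2}\right)^m$ is super-exponentially lossy at $m=N+1$. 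That is immaterial for the lemma as stated (only existence of $\sigma_{m,N}$ is claimed) and for the fixed-$N$ composite rules of this paper, but it would matter for the $hp$-type considerations the paper mentions.
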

\begin{proof} Since $\mathcal{S}_N $ is the orthogonal projection of   $H^0$
onto $\mathrm{span}\{ \exp(\ri j\theta): 0 \leq \vert j \vert \leq N\}$, the result
is trivial for $m = 0$. So let us assume now that  $m \geq 1$.  
Since $f_c$ is even,   
from (\ref{eq:soper}) we have,  \text{for all} $ J \geq 0$, 
\[
(I-\mathcal{S}_J)f_c = 2
\sum_{\mu \geq J+1} \widehat{f_c}(\mu) \cos(\mu \theta),\quad\text{and} \quad  {
\|(I-\mathcal{S}_J)f_c\|^2_{H^{m}} = 2 \sum_{\mu\geq  J+1 } \mu^{2m}
\big|\widehat{f_c}(\mu)\big|^2 . } 
\] 
Then, using Lemma \ref{lemma:ftprop}, we obtain 
\begin{eqnarray}
\left\|\left(I-\mathcal{S}_J\right)f_c\right\|^2_{H^{m}}
&\leq& \frac{1}{2} \sum_{\mu \geq J+1} \mu^{2 m-2} \left| \widehat{({f'})_c}(\mu-1)-
\widehat{(f')_c}(\mu+1)\right|^2 \nonumber\\ &\leq&   \sum_{\mu \geq J+1}
\mu^{2 m-2} \left| \widehat{(f')_c}(\mu-1) \right|^2+
 \sum_{\mu \geq J+1} \mu^{2 m-2} \left|
  \widehat{(f')_c}(\mu+1)\right|^2\nonumber\\
& = & \sum_{\mu \geq J}
(\mu+1)^{2 m-2} \left| \widehat{(f')_c}(\mu) \right|^2+  \sum_{\mu \geq J+2}
(\mu-1)^{2 m-2} \left| \widehat{(f')_c}(\mu)\right|^2\ \nonumber 
\\
& \leq & 2 \sum_{\mu \geq J}
\left({\mu+1}\right)^{2 m-2}  \left| \widehat{(f')_c}(\mu) \right|^2\ .
\label{eq:JtoJm1}
\end{eqnarray}
Hence,   in the case  $J\geq 1$, we have, 
\begin{eqnarray}
\left\|\left(I-\mathcal{S}_J\right)f_c\right\|^2_{H^m} &\leq&  
2 \left( \frac{J+1}{J} \right)^{2m-2}   \sum_{\mu \geq J} \mu^{2 m-2}  \left| \widehat{(f')_c}(\mu)\right|^2 \ \nonumber\\
& = &  \left( \frac{J+1}{J} \right)^{2m-2} \Vert (I -
\mathcal{S}_{J-1} ) (f')_c\Vert_{H^{m-1}}^2\ . 
\label{eq:2.star}
\end{eqnarray}
Using this identity $m-1$ times  (recalling that $m\leq N+1$), 
 we obtain
\begin{eqnarray*}
& &  \mbox{\hspace{-0.5in}}  \Vert (I-\mathcal{S}_N)f_c\Vert_{H^m} \ \leq \  \\
& \quad & \left(\frac{N+1}{N} \right)^{m-1}\left(\frac{N}{N-1}\right)^{m-2} \ldots \left(\frac{N-m+3}{N-m+2}\right)
\Vert (I - \mathcal{S}_{N-m+1})(f^{(m-1)})_c\Vert_{H^1}\ ,
\end{eqnarray*}
which we write as 
\begin{equation}
\label{eq:onestep}
\Vert (I-\mathcal{S}_N)f_c\Vert_{H^m} \ \leq \ \sigma_{m,N}
\Vert (I - \mathcal{S}_{N-m+1})(f^{(m-1)})_c\Vert_{H^1}\  . 
\end{equation}
Now, if $m <N+1$, we can use {\eqref{eq:2.star}} one more time and use
the fact that $\mathcal{S}_{N-m}$ is an orthogonal projection on $H^0$
to obtain the required result. On the other hand if  $m = N+1$ we can
use the fact that 
$$
\Vert (I - \mathcal{S}_0 )(f^{(m-1)})_c\Vert_{H^1} \ \leq \
   \Vert (f^{(m)})_c\Vert_{H^0}\ ,
$$
which is easily obtained from 
\eqref{eq:JtoJm1}, to deduce (\ref{eq:lemma_statement}). 
\end{proof}

\begin{remark} {To estimate the constants $\sigma_{m,N}$, note that in  
each pair of terms in the product, we can cancel the denominator 
in the left-hand term with the numerator in the right-hand term to obtain, for $m \geq 1$,  
$$\sigma_{m,N}  \ = \ \frac{(N+1)^{m-1}}{N(N-1) \ldots (N-m+2)}\ = \ \prod_{j=1}^{m-1} \left(\frac{N+1}{N+1 - j}\right) , $$
(with the product being interpreted as $1$ when $m = 1$). 
Thus  for fixed $m \geq 1$,  $\sigma_{m,N}\rightarrow 1$ as $N \rightarrow \infty$. 
Moreover letting $m$ grown with $N$ (for example $m = N+1$), we have 
$$\sigma_{N+1,N} = \frac{(N+1)^N}{N!} \ =\ \frac{N^N}{N!}\left(1 + \frac{1}{N}\right)^N \ \sim  \frac{e^{N+1}}{\sqrt{2\pi N}}, $$ 
where the last relation is obtained using Stirling's formula.  
Since in this paper we will use  fixed order methods (i.e. $N$ fixed) and 
obtain convergence for composite methods as the mesh size shrinks, the growth 
of $\sigma_{N+1,N}$ is not of essential importance to us here. However if we wanted to use $hp$ quadrature then this growth would be important 
and would need to be cancelled by suitable 
decay of the derivatives of $f$ in order to obtain convergence.   Estimates of this type are in \cite{Me:09}. 
}
\end{remark}

Now in Theorem  \ref{thm:filon-cc} below we will obtain the analogue of
Theorem \ref{prop:convRule}, but with (appropriate  weighted norms of)
derivatives of $f$, rather than
$f_c$ on the right-hand side.   
For any integer $m \geq 0$, and a function $f$ defined on $[a,b]$,  we introduce
the weighted  seminorm
\be \label{eq:weight} 
\vert f \vert_{H^m_w[a,b] } \ : = \ \left\{ \int_{a}^b \frac{\vert f^{(m)}(x)
  \vert^2}{\sqrt{(b-x) (x-a)}} \mathrm{d}x \right\}^{1/2}\ .  \ee 
When  $[a,b] = [-1,1]$ we just write $\vert \cdot  \vert_{H^m_w }$ and
we note that 
\be 
\vert f \vert_{H^m_w }
\  = \ \left\{ \int_{0}^\pi \vert
(f^{(m)})_c (\theta)\vert^2 \,  \mathrm{d}\theta \
\right\}^{1/2}={{\sqrt{\pi}}\big\|\big(f^{(m)}\big)_c\big\|_{H^0}} .    \label{eq:weightedc} 
\ee

\begin{theorem}
\label{thm:filon-cc}
Let  
$r \in [0,2]$ and $0 \leq m \leq N+1$. There exist constants $\sigma_{m,N}'$
such that 
\be
   \left|I_{k}(f) - I_{k,N}(f) \right| \ \leq
   \  \sigma_{m,N}'   \left(\frac{1}{k}\right)^r
   \left(\frac{1}{N}\right)^{m-\rho(r)} \vert f\vert_{H^m_w} 
   \ , 
\ee
   when $\vert f\vert_{H^m_w} < \infty $. {Moreover $\sigma_{m,N}' = C 
\sigma_{m,N}$ with $C$ independent of $m,N$.}  
\end{theorem}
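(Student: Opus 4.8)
The plan is to deduce the estimate from Theorem~\ref{prop:convRule} by first exploiting the polynomial exactness of the rule, and then converting the $H^m$-norm of $f_c$ appearing there into the weighted seminorm $\vert f\vert_{H^m_w}$ by means of Lemma~\ref{lemma:ftnorm} and the identity \eqref{eq:weightedc}. The starting observation is that the FCC rule integrates algebraic polynomials of degree at most $N$ exactly: since $Q_N p = p$ for any such $p$, we have $I_{k,N}(p)=I_k(p)$, so the error functional $E_k:=I_k-I_{k,N}$ annihilates every polynomial of degree at most $N$. Hence $E_k(f)=E_k(f-p)$ for all such $p$, and I am free to subtract a conveniently chosen polynomial before invoking Theorem~\ref{prop:convRule}.

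The decisive choice is to let $p$ be the polynomial whose pullback is the truncated cosine series of $f_c$. Because $f_c$ is even, $\mathcal{S}_N f_c(\theta)=\sum_{\mu=0}^{N}a_\mu\cos(\mu\theta)$ for suitable coefficients $a_\mu$, and setting $p(x):=\sum_{\mu=0}^{N}a_\mu T_\mu(x)$ gives a polynomial of degree at most $N$ with $p_c=\mathcal{S}_N f_c$, using $T_\mu(\cos\theta)=\cos(\mu\theta)$. Consequently $(f-p)_c=(I-\mathcal{S}_N)f_c$. Applying Theorem~\ref{prop:convRule} to $f-p$ and then Lemma~\ref{lemma:ftnorm} yields
\[
\vert E_k(f)\vert=\vert E_k(f-p)\vert\le C (1/k)^r (1/N)^{m-\rho(r)}\big\|(I-\mathcal{S}_N)f_c\big\|_{H^m}\le C\,\sigma_{m,N}\,(1/k)^r(1/N)^{m-\rho(r)}\big\|(f^{(m)})_c\big\|_{H^0},
\]
and since \eqref{eq:weightedc} gives $\|(f^{(m)})_c\|_{H^0}=\pi^{-1/2}\vert f\vert_{H^m_w}$, the result follows with $\sigma_{m,N}'=C\pi^{-1/2}\sigma_{m,N}$, where $C$ is the constant of Theorem~\ref{prop:convRule}, which is independent of $m$, $N$ and $r$; this is exactly the claimed form $\sigma_{m,N}'=C\sigma_{m,N}$. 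Note that Lemma~\ref{lemma:ftnorm} also guarantees $(f-p)_c\in H^m$ whenever $\vert f\vert_{H^m_w}<\infty$, so the hypothesis needed to apply Theorem~\ref{prop:convRule} to $f-p$ is automatically met.

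The one place that needs care — and the main obstacle — is the parameter range. Theorem~\ref{prop:convRule} is available only for $m>\max\{1/2,\rho(r)\}$, so the reduction above proves the estimate on exactly that set, which is the regime of genuine interest since there $m-\rho(r)>0$ and the bound decays. The low-regularity corner $m\le\rho(r)$ (including $m=0$, where the bound merely grows like $N^{\rho(r)-m}$) is not reached by this reduction, nor by H\"older interpolation in $r$ at fixed $m$, since such interpolation stays inside the region $\rho(r)<m$. I would therefore treat this corner separately, reproving the endpoint cases $r=0,1,2$ directly from the structure of $E_k$ — the $r=0$ case being a stability (Lebesgue-constant) bound that delivers the $m=0$ estimate, and the $k$-decay cases using integration by parts together with the fact that $Q_Nf$ matches $f$ at the endpoints $\pm1$, so the boundary terms cancel in $E_k$ — all the while tracking the $\sigma_{m,N}$ dependence so that the final constant retains the form $C\sigma_{m,N}$.
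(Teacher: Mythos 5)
Your proof is essentially identical to the paper's: the paper likewise subtracts the polynomial $p$ with $p_c=\mathcal{S}_N f_c$, invokes exactness of $I_{k,N}$ on polynomials of degree $\le N$, applies Theorem~\ref{prop:convRule} to $f-p$, and finishes with Lemma~\ref{lemma:ftnorm} and the identity \eqref{eq:weightedc}, arriving at $\sigma_{m,N}'=C\sigma_{m,N}$ exactly as you do. Your closing caveat about the range $m\le\max\{1/2,\rho(r)\}$ is well observed --- the paper's own proof applies Theorem~\ref{prop:convRule} there without comment, so on that corner you are, if anything, more careful than the published argument.
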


\begin{proof} 
Note that if  $\vert f\vert_{H^m_w} < \infty $ then $f \in L^2[-1,1]$
and so we can  define an  
 algebraic polynomial $p$ of degree
$N$ by 
\be
p(x) = \widehat{f_c} (0) +2\sum_{n=1}^{ N}
\widehat{f_c} (n) T_{n} (x) \ .\label{eq:toper}
\ee
Clearly (recalling \eqref{eq:soper}),  
$
p_c(\theta) = \left( \mathcal{S}_N
f_c\right)(\theta)$, \text{for all} $\theta \in [-\pi,\pi] \ .
$
Since   $I_{k,N}$ is exact for all
polynomials of degree up to $N$, we have, using Theorem
\ref{prop:convRule},   
\begin{eqnarray*}|I_{k}(f) -
I_{k,N}(f)|&=& \left| I_{k} \left(f-p\right) -
I_{k,N}\left(f- p \right) \right|\\&\leq&
C \left(\frac{1}{k}\right)^r \left(\frac{1}{N}\right)^{m-\rho(r)}
\left\|(f- p )_c\right\|_{H^{m}} \\ & = &
C\left(\frac{1}{k}\right)^r \left(\frac{1}{N}\right)^{m-\rho(r)}\left
\|(I - \mathcal{S}_{N})f_c\right\|_{H^{m}} .\end{eqnarray*}
Then, using Lemma \ref{lemma:ftnorm}, 
\begin{eqnarray}\left|I_{k}(f) - I_{k,N}(f)\right| &\leq&
C \sigma_{m,N} \left(\frac{1}{k}\right)^r\left(\frac{1}{N}\right)^{m-\rho(r)} \,  
\|(f^{(m)})_c\|_{H^0}  
\end{eqnarray}
and the result follows from \eqref{eq:weightedc}.
\end{proof}

\subsection{Integrals over $[a,b]$}
\label{subsec:ab} 
Now we consider the  integral \eqref{eq:int_not} for general
$[a,b]$. 
To apply the FCC
quadrature, we first  transform the integral using the following
linear change of variables:
\be
x = c + ht, \ \ t  \in [-1,1], \ \ \ \ \ \ \ \ \ \text{where} \ \ \
c := \frac{b+a}{2}, \ \ \text{and} \ \ h := \frac{b-a}{2} \ .\label{eq:abtounit}
\ee
Then we may write 
\be
I_{k}^{[a,b]}(f) 
= h \exp\left({{\rm i}}kc \right) I_{\tilde{k}}
(\tilde{f}),\label{eq:trans} 
\ee
where $ \tilde{k} = h k $ and $\tilde{f}$ is the  function on
$[-1,1]$: 
\be  
 \tilde{f}(t) = f\left(c + ht\right) \ , \quad t \in [-1,1] \
 .\label{eq:deftilde} 
\ee
Then we apply the  quadrature rule  \eqref{eq:therule}-\eqref{eq:therule:02} to the
integral on the right-hand side of \eqref{eq:trans} to obtain the approximation  

\be  \ I_{k,N}^{[a,b]}(f) := h \exp \left({{\rm i}} k c \right)
I_{\tilde{k},N}(\tilde{f}) \ \approx \ I_k ^{[a,b]}(f) \
.  \label{eq:transquad} \ee 



The following theorem  is the corresponding extension of Theorem
\ref{thm:filon-cc}. 

\begin{theorem}
\label{th:FCC_ab}
 Let $r \in [0,2]$ and  $0\leq m \leq N+1$.  Then, when $\vert f\vert _{H^m_w[a,b]}\ < \infty$, we have 
\be
   \left|I_{k}^{[a,b]}(f) - I_{k,N}^{[a,b]}(f) \right|\ \leq
\ \sigma_{m,N}' \,
 \left(\frac{1}{k}\right)^r   h^{m+1-r} \, \left(
  \frac{1}{N}\right)^{m-\rho (r)} \, 
\vert f\vert _{H^m_w[a,b]}\ .
\label{eq:estab} 
\ee

\end{theorem}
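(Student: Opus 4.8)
The plan is to reduce everything to the fundamental interval via the affine change of variables \eqref{eq:abtounit} and then invoke Theorem \ref{thm:filon-cc}. First I would combine the identities \eqref{eq:trans} and \eqref{eq:transquad} to observe that
$$\left|I_{k}^{[a,b]}(f) - I_{k,N}^{[a,b]}(f)\right| = h \left|I_{\tilde{k}}(\tilde{f}) - I_{\tilde{k},N}(\tilde{f})\right|,$$
where $\tilde{k} = hk$ and $\tilde{f}(t) = f(c+ht)$, since the common prefactor $h\exp(\mathrm{i}kc)$ has modulus $h$ and factors out of both terms.

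Next I would apply Theorem \ref{thm:filon-cc} on $[-1,1]$ to $\tilde{f}$ with oscillation parameter $\tilde{k}$, giving
$$\left|I_{\tilde{k}}(\tilde{f}) - I_{\tilde{k},N}(\tilde{f})\right| \leq \sigma_{m,N}' \left(\frac{1}{\tilde{k}}\right)^r \left(\frac{1}{N}\right)^{m-\rho(r)} \vert \tilde{f}\vert_{H^m_w}.$$
Because $\tilde{k} = hk$, the first factor equals $h^{-r}(1/k)^r$, which together with the outer factor $h$ already supplies the power $h^{1-r}$. It then remains to produce the missing factor $h^{m}$ from the seminorm.

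The key step, and the one requiring care, is to track how the weighted seminorm transforms under the change of variables. By the chain rule $\tilde{f}^{(m)}(t) = h^m f^{(m)}(c+ht)$, contributing a factor $h^{2m}$ inside the integral defining $\vert\tilde f\vert_{H^m_w}^2$. The substitution $x = c+ht$ maps $(-1,1)$ onto $(a,b)$ with $\mathrm{d}t = \mathrm{d}x/h$, and, crucially, the Chebyshev weight transforms covariantly: since $1 - t^2 = (b-x)(x-a)/h^2$ one has $\sqrt{(1-t)(1+t)} = \sqrt{(b-x)(x-a)}/h$. The stray half powers of $h$ from the Jacobian and from the weight cancel, leaving $\vert\tilde f\vert_{H^m_w}^2 = h^{2m}\vert f\vert_{H^m_w[a,b]}^2$, that is $\vert\tilde f\vert_{H^m_w} = h^m \vert f\vert_{H^m_w[a,b]}$. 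This is precisely why the weight $\sqrt{(b-x)(x-a)}$ was built into \eqref{eq:weight}: it forces the seminorm to scale by a clean power of $h$. Collecting the powers $h \cdot h^{-r} \cdot h^m = h^{m+1-r}$ then yields \eqref{eq:estab}, with the same constant $\sigma_{m,N}'$. I expect the only genuine pitfall to be the bookkeeping of these half powers of $h$ from the weight versus the Jacobian; once that is handled, everything else is a direct substitution.
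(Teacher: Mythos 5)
Your proposal is correct and follows essentially the same route as the paper: factor out $h\exp(\mathrm{i}kc)$ using \eqref{eq:trans}--\eqref{eq:transquad}, apply Theorem \ref{thm:filon-cc} with $\tilde{k}=hk$, and use the scaling identity $\vert\tilde f\vert_{H^m_w} = h^m \vert f\vert_{H^m_w[a,b]}$, which is exactly the paper's computation \eqref{eq:fdash}. Your careful verification that the half powers of $h$ from the Jacobian and the Chebyshev weight cancel is the same bookkeeping the paper performs implicitly in that substitution.
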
  
\begin{proof}
From \eqref{eq:trans}, \eqref{eq:transquad} and then Theorem
\ref{thm:filon-cc}, we obtain 
\begin{eqnarray}\left|I_{k}^{[a,b]}(f) - I_{k,N}^{[a,b]}(f) \right|&=&
h \left|I_{\tilde{k}}(\tilde{f}) -
I_{\tilde{k},N}(\tilde{f}) \right|  \notag \\ &\leq&  \sigma_{m,N}'\, 
 h  \left(\frac{1}{\tilde{k}}\right)^r  \left(\frac{1}{N}\right)^{m-\rho(r)} 
\vert \tilde{f}\vert_{H^m_w} \ \notag \\
& = & \sigma_{m,N}' \left(\frac{1}{k}\right)^r   h^{1-r}\left( \frac{1}{N}
\right)^{m-\rho(r)} \vert \tilde{f} \vert_{H^m_w}\ .
\label{eq:inter_step}\end{eqnarray}
Now $\tilde{f}^{(m)}(t) = h^m f^{(m)}\left(c + ht\right)$, and so 
\begin{eqnarray}
\vert \tilde{f}\vert_{H^m_w}^2 
&=& h^{2m} \int_{-1}^{1} \frac{\left|
{f}^{(m)}\left(c + h t\right)\right|^2}
{\sqrt{1-t^2}} \,\, {{\rm d}t} \ = \ h^{2m} \vert f \vert_{H^m_w[a,b]}^2\  
\label{eq:fdash} \end{eqnarray} 
and the result
follows.
\end{proof}

The most important use of this theorem will be for the case when $N$
is fixed and convergence is obtained by letting $h \rightarrow 0$ (as
arises when composite versions of the FCC  rule are used). For this
case we have the following corollary, which is obtained using Theorem 
\ref{th:FCC_ab} with $m = N+1$. 

\begin{corollary}
\label{cor:FCC_ab}
 Let $r \in [0,2]$. For each $N \geq 1$,
there  exists a constant {$c_N = C\sigma_{N+1,N}'  (1/N)^{N+1-\rho(r)} $},  such that  
\be
   \left|I_{k}^{[a,b]}(f) - I_{k,N}^{[a,b]}(f) \right|\ \leq
\ c_N 
\left(\frac{1}{k}\right)^r \,  h^{N+2-r} \, 
\max_{x\in[a,b]} \vert f^{(N+1)} (x) \vert\ 
\ee
when $f \in C^{N+1}[a,b]$.
\end{corollary}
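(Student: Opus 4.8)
The plan is to obtain Corollary~\ref{cor:FCC_ab} directly from Theorem~\ref{th:FCC_ab} by specialising to the endpoint case $m = N+1$, which is the largest value of $m$ permitted in that theorem. With this choice the factor $(1/N)^{m-\rho(r)}$ becomes $(1/N)^{N+1-\rho(r)}$ and the power of $h$ becomes $h^{m+1-r} = h^{N+2-r}$, so the only remaining task is to bound the weighted seminorm $\vert f \vert_{H^{N+1}_w[a,b]}$ in terms of the pointwise quantity $\max_{x \in [a,b]} \vert f^{(N+1)}(x)\vert$.

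First I would recall the definition \eqref{eq:weight} of the seminorm with $m = N+1$, namely
\be
\vert f \vert_{H^{N+1}_w[a,b]}^2 \ = \ \int_a^b \frac{\vert f^{(N+1)}(x)\vert^2}{\sqrt{(b-x)(x-a)}}\, \mathrm{d}x .
\ee
Since $f \in C^{N+1}[a,b]$, the derivative $f^{(N+1)}$ is continuous and hence bounded on the compact interval $[a,b]$, so I can pull the supremum out of the integrand and estimate
\be
\vert f \vert_{H^{N+1}_w[a,b]}^2 \ \leq\ \Big(\max_{x\in[a,b]}\vert f^{(N+1)}(x)\vert\Big)^2 \int_a^b \frac{\mathrm{d}x}{\sqrt{(b-x)(x-a)}} .
\ee
The remaining integral is a standard Chebyshev-weight integral whose value is $\pi$, independent of $a$ and $b$ (this is most transparent after the change of variables \eqref{eq:abtounit}, under which it becomes $\int_{-1}^1 (1-t^2)^{-1/2}\,\mathrm{d}t = \pi$). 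Taking square roots then gives $\vert f\vert_{H^{N+1}_w[a,b]} \leq \sqrt{\pi}\,\max_{x\in[a,b]}\vert f^{(N+1)}(x)\vert$.

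Substituting this bound into \eqref{eq:estab} with $m = N+1$ yields
\be
\left|I_{k}^{[a,b]}(f) - I_{k,N}^{[a,b]}(f)\right| \ \leq\ \sqrt{\pi}\,\sigma_{N+1,N}'\left(\frac{1}{k}\right)^r h^{N+2-r}\left(\frac{1}{N}\right)^{N+1-\rho(r)}\max_{x\in[a,b]}\vert f^{(N+1)}(x)\vert ,
\ee
and absorbing the $N$-dependent constants into $c_N := C\,\sigma_{N+1,N}'\,(1/N)^{N+1-\rho(r)}$ (with $C = \sqrt{\pi}$) gives exactly the claimed estimate. I do not anticipate any genuine obstacle here: this is a routine corollary, and the only point requiring mild care is confirming that the Chebyshev-weight integral evaluates to the $a,b$-independent constant $\pi$, so that no spurious $h$-dependence is introduced and the clean power $h^{N+2-r}$ is preserved.
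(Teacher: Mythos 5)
Your proof is correct and is exactly the argument the paper intends: it states that the corollary ``is obtained using Theorem \ref{th:FCC_ab} with $m = N+1$'', and the only detail it leaves implicit is the one you supply, namely that $\int_a^b \bigl((b-x)(x-a)\bigr)^{-1/2}\,\mathrm{d}x = \pi$ independently of $a,b$, so that $\vert f\vert_{H^{N+1}_w[a,b]} \leq \sqrt{\pi}\,\max_{x\in[a,b]}\vert f^{(N+1)}(x)\vert$ with no spurious $h$-dependence. No gaps; your identification of $C=\sqrt{\pi}$ is consistent with the constant $c_N = C\sigma_{N+1,N}'(1/N)^{N+1-\rho(r)}$ in the statement.
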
  

\section{Composite Clenshaw-Curtis Rules}
\label{sec:Composite}
In this section, we will consider the  computation of
$
I_{k}^{[a,b]}(f)$, 
where  $f$ is allowed to have an algebraic or logarithmic singularity 
in $[a,b]$.  
To control the length of the paper, {we restrict to functions 
$f$ which  are  not continuously differentiable. 
Singularities in higher derivatives can be treated in an analogous way.}


Without loss of generality we set $[a,b] = [0,1]$ and assume 
that the only singularity occurs at the origin. 
The case of a 
finite number of singularities on $[a,b]$ can be 
treated by splitting  $[a,b]$ up into
subintervals, each  with only one singularity at an end point, and
them mapping each interval onto $[0,1]$ in an obvious  affine way.  
Hence,  for $\beta\in(0,1)$ and $m \geq 1$,  we introduce
 \begin{equation}
   ||v||_{m,\beta} := \max\left\{ \sup_{x \in [0,1]} |v(x)| , \sup_{x \in (0,1]}
   \left|x^{(j-\beta)} {v^{(j)}}(x)\right|, \ \ j = 1, ..., m \right\}.
   \label{eq:sing_pos} 
\end{equation}
We denote by $C^m_\beta[0,1]$ the space of all functions $v \in C[0,1]$ such that $||v||_{m,\beta}<\infty$. Similarly, for
$\beta\in(-1,0)$ we define
 \begin{equation} 
   ||v||_{m,\beta} := \max\left\{ \sup_{x \in (0,1]}
   \left|x^{(j-\beta)} {v^{(j)}}(x)\right|, \ \ j = 0, ..., m\right\}
   \label{eq:sing_neg}  \ ,
\end{equation}
and choose  $C^{m}_{\beta}[0,1]$ to be the space of all $v \in C(0,1]$ such that $||v||_{m,\beta}<\infty$. 
Finally, we cover the case of logarithmic singularities via the norm
\begin{equation}
   ||v||_{m,0} := \max\left\{ \sup_{x \in [0,1]} |(|\log x| +1)^{-1}v(x)| ,
\sup_{x \in [0,1]}
   \left|x^{j}{v^{(j)}}(x)\right|, \ \ j = 1, ..., m  \right\} 
   \label{eq:sing_log} 
\end{equation}
and introduce the associated space $C^{m}_{0}[0,1]$. 
Note that $C^{m}_{\beta}[0,1] \subset L^{1}[0,1]$ for all $\beta \in (-1,1)$.

\subsection{The composite algorithm} \label{section:algorithm}

When $f\in C^{m}_{\beta} [0,1]$, for $\beta \in (-1,1)$, our strategy for computing $I_k^{[0,1]}(f)$ 
is to apply the FCC rule in a composite fashion on a mesh graded towards the singularity. 
With the right choice of mesh the error of the
quadrature can then be made to satisfy a uniform error estimate on subintervals and be small overall.
Let us recall the classical graded mesh 
\be \Pi_{M,q} : = \left\{x_{j} := \left(
\frac{j}{M}\right)^{q} : \ j = 0,1,\ldots,M \right\},
\label{eq:mesh}
\ee
where $q \geq 1$ is the grading parameter to be chosen. 
{This  mesh - originally proposed in \cite{Rice:69} -  is well-known to give  
optimal approximation of  functions with singularities by fixed order 
piecewise polynomials.  An application to  quadrature was given 
in  \cite{GrMe:89}.   This paper
contains  an extension of these  results to the computation   
of  oscillatory integrals with singularities.}  
{Writing }
\[
I^{[0,1]}_k(f) = I^{[x_0,x_1]}_k(f) + \sum_{j = 2}^{M}I^{[x_{j-1},x_j]}_k(f),
\]
we approximate each term in the sum on the right-hand side by applying the FCC rule as 
defined in (\ref{eq:transquad}). The strategy for approximating the first term on the 
right-hand side depends on whether $\beta \leq 0$ or $\beta >0$. Precisely we define the approximation 
\begin{equation}
\widetilde{I}_k^{[x_0,x_1]}(f) :=\left\{  \begin{array}{ll} 
{I^\igg{[x_{0},x_1]}_{k,1}(f)},
 & \text{if} \ \  
\beta \in (0,1),\\ 0, & \text{if} \ \  \beta \in (-1,0].\end{array} \right.  
\  \label{eq:first_approx_def}
\end{equation}
Note that for $\beta\in(0,1)$,  
\[
I^{[x_{0},x_1]}_{k,1}(f)=\left\{
\begin{array}{ll}
\displaystyle\int_{x_0}^{x_1}\left(Q_1^{[x_0,x_1]}f\right) (x) \exp({\rm i}kx)\,{\rm d}x, &\text{if }\ {x_1k\geq1}\\
\displaystyle\int_{x_0}^{x_1}\left(Q_1^{[x_0,x_1]}\left(f \exp({\rm i}k\,\cdot\,)\right)\right)(x)\,{\rm d}x, &\text{if }  \ {x_1k<1}
\end{array}
\right.
\]where $Q_1^{[x_0,x_1]}f$ is the linear function interpolating $(x_0,
f(x_0))$ and $(x_1, f(x_1))$. 
\igg{(To obtain this formula, recall that
from  \eqref{eq:trans}, $I_{k,1}^{[x_0,x_1]}(f) = h \exp(ikc)
I_{\tilde{k},1}(\tilde{f})$ where $\tilde{k} = k x_1/2$, 
and recall
\eqref{eq:therule} and \eqref{eq:therule:02}).}
The composite quadrature rule is
\be
I^{[0,1]}_{k, N,M,q}(f) \ := \ \tilde{I}^{[x_0,x_1]}_k(f) + \sum_{j = 2}^{M}I^{[x_{j-1},x_j]}_{k,N}(f).
\ee
The corresponding error \igg{may then be bounded by
\begin{equation}
E_{k,N,M,q}(f) \ : = \  \left| I_k^{[0,1]}(f) - I^{[0,1]}_{k, N,M,q}(f)\right|
\ \leq \ |\tilde{e}_1| + \sum_{j = 2}^{M}|e_j| ,
\label{eq:basic_error_bd}\end{equation}}
where
\begin{eqnarray}
\tilde{e}_1 &=& I^{[x_0,x_1]}_k(f) - \tilde{I}^{[x_0,x_1]}_k(f) , \ \ \ \ \ \ \ \text{and} \label{eq:e1}\\
e_j &=& I^{[x_{j-1},x_j]}_k(f) - {I}^{[x_{j-1},x_j]}_{k,N}(f), \ \ \ \ \ \  \text{for} \ \ j = 2,...,M. \label{eq:ej}
\end{eqnarray}
In the following two sections, 
we derive results which will help us estimate 
 $|\tilde{e}_1|$. These are subsequently   used to estimate the total error $E_{k,N,M,q}(f)$ in 
 Theorem
\ref{thm:beta_all}. 

\subsection{Estimates on the size of the integrals}\label{section:estimates_of_int}
In the following two lemmas, we analyse the integrals  
$I^{[0,\varepsilon]}_k (f)$, where $f\in C^1_\beta[0,1]$, making  explicit  
the rate of decay as both $\varepsilon \rightarrow 0$ and $k \rightarrow \infty$.


\begin{lemma}
\label{lemma:betaNeg} 
For any $\beta\in(-1,0)$ and any $f\in C^1_\beta[0,1]$ there exists
$C_\beta >0$
such that for  $\varepsilon\in(0,1]$ we
have
\begin{equation}\label{eq:01:lemma:betaNeg}
 |I_{k}^{[0,\varepsilon]} (f) |\ \le\  C_\beta 
\varepsilon^{1+\beta-s}\bigg(\frac{1}k\bigg)^s\|f\|_{1,\beta}
\end{equation}
where   $s\in[0,1+\beta]$.
Furthermore,
for any $f\in C^1_{0}[0,1]$ there exists $C_0>0$ such that for $\varepsilon\in(0,1]$, we have
\begin{equation}
\label{lemma:betalog}
|I_{k}^{[0,\varepsilon]} (f) |\ \le \ C_0
\big(\varepsilon  +\varepsilon|\log\varepsilon| \big)^{1-s}\bigg(\frac{1+ \log
k}k\bigg)^{s} \|f\|_ { 1 , 0 }
\end{equation}
where  $s\in[0,1]$ .
\end{lemma}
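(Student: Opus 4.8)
The plan is to bound the oscillatory integral $I_k^{[0,\varepsilon]}(f)$ by interpolating between two extreme estimates: a crude one that exploits only the integrability of the singularity (giving the factor $\varepsilon^{1+\beta}$ with no help from oscillation), and a sharper one that exploits integration by parts to extract decay in $k$. First I would establish the $s=0$ endpoint. For $\beta\in(-1,0)$ and $f\in C^1_\beta[0,1]$ the defining seminorm $\|f\|_{1,\beta}$ controls $|f(x)|\le \|f\|_{1,\beta}\, x^{\beta}$ on $(0,1]$ (taking $j=0$ in \eqref{eq:sing_neg}), so
\[
|I_k^{[0,\varepsilon]}(f)| \ \le\ \int_0^\varepsilon |f(x)|\,\mathrm{d}x \ \le\ \|f\|_{1,\beta}\int_0^\varepsilon x^{\beta}\,\mathrm{d}x \ =\ \frac{\|f\|_{1,\beta}}{1+\beta}\,\varepsilon^{1+\beta},
\]
which is exactly \eqref{eq:01:lemma:betaNeg} with $s=0$. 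This already uses the hypothesis $\beta>-1$ so that the power is integrable.

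Next I would establish the $s=1+\beta$ endpoint, where all the $k$-decay is extracted. The natural tool is integration by parts against $\exp(\mathrm{i}kx)$, writing a primitive of the oscillatory factor as $(\mathrm{i}k)^{-1}\exp(\mathrm{i}kx)$. This produces a boundary term at $x=\varepsilon$ of size $k^{-1}|f(\varepsilon)|\le k^{-1}\|f\|_{1,\beta}\,\varepsilon^{\beta}$, a (vanishing or controllable) boundary contribution at $x=0$, and an interior integral $k^{-1}\int_0^\varepsilon f'(x)\exp(\mathrm{i}kx)\,\mathrm{d}x$. Using $|f'(x)|\le \|f\|_{1,\beta}\,x^{\beta-1}$ from the $j=1$ term of \eqref{eq:sing_neg}, the interior integral is bounded in modulus by $k^{-1}\|f\|_{1,\beta}\int_0^\varepsilon x^{\beta-1}\,\mathrm{d}x = k^{-1}\|f\|_{1,\beta}\,\varepsilon^{\beta}/\beta$ (finite since $\beta>-1$, i.e. $\beta-1>-2$, but care is needed because $\beta<0$ makes $x^{\beta-1}$ non-integrable at $0$ — this is the delicate point, see below). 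Combining, one expects $|I_k^{[0,\varepsilon]}(f)|\le C_\beta\, k^{-(1+\beta)}\cdot(\text{something})$; matching the advertised form $\varepsilon^{1+\beta-s}k^{-s}$ at $s=1+\beta$ gives the target bound $C_\beta\,k^{-(1+\beta)}\|f\|_{1,\beta}$. The interpolation between the two endpoints is then elementary: writing $|I_k^{[0,\varepsilon]}(f)|=|I_k^{[0,\varepsilon]}(f)|^{\,1-s/(1+\beta)}\,|I_k^{[0,\varepsilon]}(f)|^{\,s/(1+\beta)}$ and applying the $s=0$ bound to the first factor and the $s=1+\beta$ bound to the second yields \eqref{eq:01:lemma:betaNeg} for all $s\in[0,1+\beta]$.

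The main obstacle, and the reason the exponents come out as $1+\beta$ rather than $1$, is precisely the non-integrability of $x^{\beta-1}$ at the origin when $\beta<0$: naive integration by parts on all of $[0,\varepsilon]$ fails because the interior integral $\int_0^\varepsilon x^{\beta-1}\,\mathrm{d}x$ diverges. The fix is to integrate by parts only on a truncated interval $[\delta,\varepsilon]$ and treat $[0,\delta]$ by the crude $L^1$ bound, then optimize the split point $\delta$ against $k$; balancing $\delta^{1+\beta}$ (the crude piece) against $k^{-1}\delta^{\beta}$ (the interior piece) leads to $\delta\sim k^{-1}$ and reproduces the $k^{-(1+\beta)}$ scaling. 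I would carry out this truncate-and-balance argument to obtain the $s=1+\beta$ endpoint rigorously, rather than integrating by parts over the full interval.

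For the logarithmic case $f\in C^1_0[0,1]$, I would run the identical two-endpoint scheme using \eqref{eq:sing_log}: the crude bound gives $\int_0^\varepsilon(|\log x|+1)\,\mathrm{d}x = O(\varepsilon+\varepsilon|\log\varepsilon|)$ for $s=0$, while the integration-by-parts endpoint uses $|f'(x)|\le\|f\|_{1,0}\,x^{-1}$, so the interior integral involves $\int_\delta^\varepsilon x^{-1}\,\mathrm{d}x = O(|\log\delta|)$; again truncating at $\delta\sim 1/k$ and balancing produces the $\big((1+\log k)/k\big)^s$ factor, the extra $\log k$ arising exactly from this logarithmic interior integral. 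Interpolating the two endpoints over $s\in[0,1]$ then gives \eqref{lemma:betalog}. Throughout I would keep the constants $C_\beta$, $C_0$ independent of $k$ and $\varepsilon$, which holds because every bound above depends on $\varepsilon$ and $k$ only through the explicit powers and logarithms displayed.
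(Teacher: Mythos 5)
Your proposal is correct and takes essentially the same route as the paper: the paper likewise establishes the crude $\varepsilon^{1+\beta}$ endpoint, then for $\varepsilon k>1$ splits the integral at $1/k$, bounding $I_k^{[0,1/k]}(f)$ crudely and integrating $I_k^{[1/k,\varepsilon]}(f)$ by parts (your truncate-at-$\delta\sim 1/k$-and-balance is exactly this splitting), and concludes by interpolating the two endpoint bounds. The logarithmic case is treated in the paper only with the remark that it ``follows similarly,'' precisely along the lines you sketch, with the interior integral $\int_{1/k}^{\varepsilon} x^{-1}\,\mathrm{d}x \le \log k$ producing the $(1+\log k)/k$ factor.
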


\begin{proof}
We will prove the lemma for the case when $\beta \in (-1,0)$. The case
when $\beta =0$ follows similarly. 
First note that for all $\varepsilon\in(0,1]$, we have
\begin{equation}\label{eq:02:lemma:betaNeg}
  |I_{k}^{[0,\varepsilon]} (f) |\ \le\  \bigg[\int_0^\varepsilon
x^{\beta}\,{\rm d}x\bigg]\|f\|_{0,\beta}\ =
\ \frac{1}{1+\beta}\varepsilon^{1+\beta}\|f\|_{0,\beta}
\ \le\ \frac{1}{1-|\beta|} \varepsilon^{1+\beta}\|f\|_{ 1,\beta}.
\end{equation}
We show now 
{\begin{equation}\label{eq:03:lemma:betaNeg}
  |I_{k}^{[0,\varepsilon]} (f) |\ \le \ 
\bigg[\frac{1}{1-|\beta|}\bigg]\bigg[\frac{2+|\beta|}{|\beta|}\bigg]
\bigg(\frac{1} k\bigg)^{1+\beta} \|f\|_{1,\beta}.
\end{equation}
and the result follows by interpolation of \eqref{eq:02:lemma:betaNeg} and
\eqref{eq:03:lemma:betaNeg}. 

To obtain  \eqref{eq:03:lemma:betaNeg}, note first that it follows trivially from
\eqref{eq:02:lemma:betaNeg} if $\varepsilon k\leq 1$. Therefore, let us assume that
$\varepsilon>1/k$, and write  
\begin{equation}\label{eq:03b:lemma:betaNeg}
 I_{k}^{[0,\varepsilon]} (f) =I_{k}^{[0,1/k]} (f) +I_{k}^{[1/k,\varepsilon]}
(f). 
\end{equation}
Now again from \eqref{eq:02:lemma:betaNeg}  
\begin{equation}\label{eq:04:lemma:betaNeg}
 |I_{k}^{[0,1/k]}(f)|\le \frac{1}{1-|\beta|}
\bigg(\frac1k\bigg)^{1+\beta}\|f\|_{
0,\beta}.
\end{equation}
On the other hand,  integration by parts yields
\[
 I_{k}^{[1/k,\varepsilon]}(f)=\frac{1}{{\rm i}k}\Big[f(x)\exp({\rm
i}kx)\Big]_{x=1/k}^{x=\varepsilon}-\frac{1}{{\rm i}k}\int_{1/k}^{\varepsilon}
f'(x)\exp({\rm i}kx)\,{\rm d}x.
\]
Thus
\begin{eqnarray*}
| I_{k}^{[1/k,\varepsilon]}(f)|&\le& \frac{1}{k}
\bigg[|f(\varepsilon)|+|f(1/k)| +\int_{1/k}^\varepsilon
|f'(x)|\,{\rm d}x \bigg]\ .
\end{eqnarray*}
Now 
for any $x>0$, \, 
$\vert f(x)\vert \leq x^\beta \Vert f \Vert_{1,\beta}$\,   and also  $$\int_{1/k}^\varepsilon \vert f'(x)\vert \rd x \ \leq  \ \int_{1/k}^\varepsilon x^{\beta - 1} \rd x \, \Vert f 
\Vert_{1,\beta}  \ \leq \ \frac{1}{\vert \beta \vert} \left[\varepsilon^\beta + \left(\frac{1}{k}\right)^\beta \right] \Vert f \Vert_{1,\beta}\ .
$$
Thus 
\begin{eqnarray*}
| I_{k}^{[1/k,\varepsilon]}(f)| &\le& \frac{1}k\Big[ 1+
 \frac{1}{|\beta|}\Big]\Big[ \varepsilon^{ \beta}+
\Big(\frac{1}k\Big)^{\beta}\Big] \|f\|_{1,\beta}, 
\end{eqnarray*}
and, since $\varepsilon^\beta<(1/k)^\beta$ we obtain
\begin{equation}\label{eq:05:lemma:betaNeg}
 |
I_{k}^{[1/k,\varepsilon]}(f)|\ \le\ {2}\Big[\frac{1+|\beta|}{|\beta|}\Big]\Big(\frac{1}
{k}\Big)^{1+\beta}\|f\|_{1,\beta}.
\end{equation}
Substituting \eqref{eq:05:lemma:betaNeg} and \eqref{eq:04:lemma:betaNeg} into
 \eqref{eq:03b:lemma:betaNeg} we obtain 
$$
I_k^{[0,\varepsilon]}(f) \ \leq \ \left[
\frac{|\beta| + 2 (1 - |\beta|^2)}{(1 - \vert \beta\vert)|\beta|}
\right]
\Big(\frac{1}
{k}\Big)^{1+\beta}\|f\|_{1,\beta}
$$
thus proving 
\eqref{eq:03:lemma:betaNeg}.}
\end{proof}

\begin{lemma}\label{lemma:betaPos} For any $\beta\in(0,1)$ and for any $f\in
C_{\beta}^2[0,1]$ there exists $C_{\beta}>0$
such that for $\varepsilon\in(0,1]$ and $s\in[0,1]$ we have
\begin{equation}
 \label{eq:01:lemma:betaPos}
 |I_{k}^{[0,\varepsilon]} (f) |\le C_\beta
\varepsilon^{1-s} \bigg(\frac{1}k\bigg)^{s}\|f\|_{1,\beta}.
\end{equation}
Moreover, if $s\in[1,1+\beta]$,
 \begin{eqnarray}
 |I_{k}^{[0,\varepsilon]} (f) |&\le&
\frac{1}{k}\bigg[|f(0)|+|f(\varepsilon)|\bigg]+
C_\beta \varepsilon^{1+\beta-s} \bigg(\frac{1}k\bigg)^{s}\|f'\|_{1,\beta-1}\nonumber \\
&\le& \frac{2}{k} \|f\|_{0,\beta}+
C_\beta \varepsilon^{1+\beta-s} \bigg(\frac{1}k\bigg)^{s}\|f\|_{2,\beta}. \label{eq:02:lemma:betaPos}
\end{eqnarray}
\end{lemma}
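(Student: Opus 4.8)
The plan is to prove the two displayed bounds separately, in each case reducing the problem to estimates that are already available. For the first bound \eqref{eq:01:lemma:betaPos}, I would establish the endpoint cases $s=0$ and $s=1$ and then interpolate, exactly as in the proof of Lemma \ref{lemma:betaNeg}. The case $s=0$ is immediate: since $\beta>0$, $f$ is bounded with $\sup_{[0,1]}|f|\le\|f\|_{1,\beta}$, so $|I_{k}^{[0,\varepsilon]}(f)|\le\int_0^\varepsilon|f(x)|\,\rd x\le\varepsilon\|f\|_{1,\beta}$. For $s=1$ I would integrate by parts once, writing
$$I_{k}^{[0,\varepsilon]}(f)=\frac{1}{\ri k}\Big[f(x)\exp(\ri kx)\Big]_{0}^{\varepsilon}-\frac{1}{\ri k}\int_0^\varepsilon f'(x)\exp(\ri kx)\,\rd x;$$
the boundary term is bounded by $(2/k)\|f\|_{1,\beta}$, while for the remaining integral I use $|f'(x)|\le x^{\beta-1}\|f\|_{1,\beta}$ together with $\int_0^\varepsilon x^{\beta-1}\,\rd x=\varepsilon^{\beta}/\beta\le 1/\beta$ (valid since $\varepsilon\le1$ and $\beta>0$), giving a bound of order $(1/k)\|f\|_{1,\beta}$. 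Interpolating via $|I_{k}^{[0,\varepsilon]}(f)|=|I_{k}^{[0,\varepsilon]}(f)|^{1-s}|I_{k}^{[0,\varepsilon]}(f)|^{s}$ then yields \eqref{eq:01:lemma:betaPos} with a constant that stays bounded for $s\in[0,1]$.

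For the second bound \eqref{eq:02:lemma:betaPos}, covering $s\in[1,1+\beta]$, the key observation is that a single integration by parts transfers the problem to $f'$, which lives precisely in the negative-index class to which Lemma \ref{lemma:betaNeg} applies directly. Integrating by parts once produces
$$\big|I_{k}^{[0,\varepsilon]}(f)\big|\le\frac{1}{k}\big[|f(0)|+|f(\varepsilon)|\big]+\frac{1}{k}\big|I_{k}^{[0,\varepsilon]}(f')\big|,$$
which already contributes the boundary term appearing in the first line of \eqref{eq:02:lemma:betaPos}. To treat the last integral I would check, directly from the definitions \eqref{eq:sing_pos} and \eqref{eq:sing_neg}, that $f\in C^2_\beta[0,1]$ forces $f'\in C^1_{\beta-1}[0,1]$ with $\|f'\|_{1,\beta-1}\le\|f\|_{2,\beta}$: the bounds $|x^{1-\beta}f'(x)|\le\|f\|_{2,\beta}$ and $|x^{2-\beta}f''(x)|\le\|f\|_{2,\beta}$ are exactly the $j=0$ and $j=1$ conditions of \eqref{eq:sing_neg} for the index $\beta-1\in(-1,0)$.

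With this inclusion in hand, I would apply Lemma \ref{lemma:betaNeg} to $g=f'$ with negative index $\tilde\beta=\beta-1$ and exponent $\tilde s=s-1$. Since $s\in[1,1+\beta]$ we have $\tilde s\in[0,\beta]=[0,1+\tilde\beta]$, which is precisely the admissible range in \eqref{eq:01:lemma:betaNeg}, so the lemma gives $|I_{k}^{[0,\varepsilon]}(f')|\le C_{\beta-1}\,\varepsilon^{1+\beta-s}(1/k)^{s-1}\|f'\|_{1,\beta-1}$. Dividing by $k$ reproduces the second term in the first line of \eqref{eq:02:lemma:betaPos}, and then substituting $|f(0)|+|f(\varepsilon)|\le2\|f\|_{0,\beta}$ and $\|f'\|_{1,\beta-1}\le\|f\|_{2,\beta}$ yields the second line.

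The main subtlety here is conceptual rather than computational: the crucial point is to recognize that differentiating $f$ lowers the singularity index by one unit and lands $f'$ exactly in the class $C^1_{\beta-1}$ governed by Lemma \ref{lemma:betaNeg}, so that no fresh oscillatory estimate is required and the bulk of the analytic work is inherited from the preceding lemma. The only care needed is to verify the norm inclusion $\|f'\|_{1,\beta-1}\le\|f\|_{2,\beta}$ and to confirm that the shift $\tilde s=s-1$ carries the range $[1,1+\beta]$ onto the admissible range $[0,\beta]$ of the negative-index estimate; both become routine once this correspondence between the two parameter regimes is identified.
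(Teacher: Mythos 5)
Your proposal is correct and follows essentially the same route as the paper: the trivial bound $\varepsilon\|f\|_{0,\beta}$ plus one integration by parts with $|f'(x)|\le x^{\beta-1}\|f\|_{1,\beta}$, interpolated for $s\in[0,1]$, and then for $s\in[1,1+\beta]$ a single integration by parts followed by Lemma \ref{lemma:betaNeg} applied to $f'\in C^1_{\beta-1}[0,1]$ with the shifted exponent $s'=s-1\in[0,\beta]$. The norm inclusion $\|f'\|_{1,\beta-1}\le\|f\|_{2,\beta}$ and the parameter-range correspondence you verify explicitly are exactly the (implicit) ingredients of the paper's argument.
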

\begin{proof}
First note that for $\varepsilon \, {\in}\,  (0,1]$, 
 \begin{equation}
 \label{eq:03:lemma:betaPos}
  |I_{k}^{[0,\varepsilon]}(f) |\le 
\varepsilon \|f\|_{0,\beta}.
\end{equation}
On the other hand, integration by parts  yields
 \begin{eqnarray}
\left| I_{k}^{[0,\varepsilon]} (f)\right| &\le& \frac{1}{k}\bigg[|f(0)|+|f(\varepsilon)|\bigg]+
\frac{1}k\bigg|\int_0^\varepsilon f'(x)\exp({\rm i}kx)\,{\rm d}x\bigg| \label{eq:inter} \\
&\le & \frac{2}{k} \|f\|_{0,\beta} + \frac{1}{k} \left| \int_{0}^{\varepsilon} f'(x) \exp({{\rm i}}kx) \, {\rm d}x \right| . \label{eq:05:lemma:betaPos}
\end{eqnarray}
{Also},  we have
\begin{equation}
\label{eq:06:lemma:betaPos}
\left| \int_{0}^{\varepsilon} f'(x) \exp({{\rm i}}kx) \, {\rm d}x \right| \leq \bigg[\int_{0}^{\varepsilon} x^{\beta -1} 
\, {\rm d}x\bigg] \|f\|_{1,\beta} = \frac{1}{\beta} \varepsilon ^{\beta}  \|f\|_{1,\beta}
\end{equation}
and substitution of (\ref{eq:06:lemma:betaPos}) into (\ref{eq:05:lemma:betaPos}) yields
\[ 
\left| I_{k}^{[0,\varepsilon]} (f)\right| \leq \left(2+\frac{1}{\beta}\right) \frac{1}{k} \|f\|_{1,\beta}.
\]
Interpolation of this with (\ref{eq:03:lemma:betaPos}) yields (\ref{eq:01:lemma:betaPos}). 

To obtain (\ref{eq:02:lemma:betaPos}), we also note $f' \in C^{1}_{\beta - 1}[0,1]$ so by Lemma \ref{lemma:betaNeg}, 
we also have 
\igg{\begin{eqnarray*}
\left| \int_{0}^{\varepsilon} f'(x) \exp({{\rm i}}kx) \, {\rm d}x \right| &\leq& C_{\beta - 1} \varepsilon ^{\beta - {s'}} 
\left( \frac{1}{k} \right)^{{s'}} \|f'\|_{1,\beta - 1} \ \leq \  C_{\beta - 1} \varepsilon ^{\beta - s'}
\left( \frac{1}{k} \right)^{s'} \|f\|_{2,\beta }
\end{eqnarray*}} 
for all $s' \in [0, \beta]$. Substituting this into {(\ref{eq:inter})} and putting $s = 1+s'$, we obtain the result.
\end{proof}


In the next lemma 
we shall verify the sharpness of the estimates in   Lemmas \ref{lemma:betaNeg}\--\ref{lemma:betaPos}  as  $k \rightarrow \infty$.  {This result concerns}  the family of functions 
\begin{equation}\label{eq:deffbeta}
f_{\beta}(x) =  \left\{\begin{array}{ll}
x^{\beta}\ , &  \beta \in (-1,0) \cup (0,1),\\
\log x \ , & \beta =0.
 \end{array} \right.
\end{equation}

\begin{lemma}
\label{remark:exact_integrals}
For all $\beta \in (-1,1)$, there exists a constant $A_\beta> 0$ such
that, \igg{for all $k$ sufficiently large},  
\begin{eqnarray}
k^{\min\{1+\beta, 1\}} |I_{k}^{[0,1]}(f_{\beta})| &\geq& A_\beta,  \ \ \ \ \ \ \text{when} \ \ \beta \not = 0  \label{eq:a1}\\
\frac{k}{\log k} |I_{k}^{[0,1]}(f_{0})| &\geq& A_0. \label{eq:a3}
\end{eqnarray}
\end{lemma}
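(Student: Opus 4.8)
The plan is to compute the large-$k$ asymptotics of each integral $I_k^{[0,1]}(f_\beta)$ precisely enough to exhibit a nonvanishing leading term, whose modulus then supplies the constant $A_\beta$. The three regimes $\beta\in(-1,0)$, $\beta\in(0,1)$ and $\beta=0$ are handled separately, but all rest on the substitution $u=kx$ together with the classical value of the oscillatory integral $\int_0^\infty u^\beta e^{\mathrm{i}u}\,\mathrm{d}u$.

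First, for $\beta\in(-1,0)$ I would set $u=kx$ to write
$$I_k^{[0,1]}(f_\beta)=k^{-1-\beta}\int_0^k u^\beta e^{\mathrm{i}u}\,\mathrm{d}u.$$
Since $\beta\in(-1,0)$ the integrand is absolutely integrable near the origin and converges conditionally at infinity, with
$$\int_0^\infty u^\beta e^{\mathrm{i}u}\,\mathrm{d}u=\Gamma(1+\beta)\,e^{\mathrm{i}\pi(1+\beta)/2},$$
a quantity of modulus $\Gamma(1+\beta)>0$ (as $1+\beta\in(0,1)$). Hence $k^{1+\beta}I_k^{[0,1]}(f_\beta)\to\Gamma(1+\beta)e^{\mathrm{i}\pi(1+\beta)/2}$, and since here $\min\{1+\beta,1\}=1+\beta$, the bound \eqref{eq:a1} follows for $k$ large by taking $A_\beta=\tfrac12\Gamma(1+\beta)$.

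Next, for $\beta\in(0,1)$, where $\min\{1+\beta,1\}=1$, I would integrate by parts once; the boundary term at $0$ vanishes because $\beta>0$, leaving
$$I_k^{[0,1]}(f_\beta)=\frac{e^{\mathrm{i}k}}{\mathrm{i}k}-\frac{\beta}{\mathrm{i}k}\int_0^1 x^{\beta-1}e^{\mathrm{i}kx}\,\mathrm{d}x.$$
The remaining integral has exponent $\beta-1\in(-1,0)$, so by the estimate just established (equivalently by Lemma \ref{lemma:betaNeg} with $s=\beta$) it is $O(k^{-\beta})$, making the second term $O(k^{-1-\beta})$. Thus $kI_k^{[0,1]}(f_\beta)=e^{\mathrm{i}k}/\mathrm{i}+O(k^{-\beta})$, whose modulus tends to $1$, giving \eqref{eq:a1} with $A_\beta=1/2$.

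Finally, for the logarithmic case, substituting $u=kx$ gives
$$k\,I_k^{[0,1]}(f_0)=\int_0^k \log u\, e^{\mathrm{i}u}\,\mathrm{d}u-\log k\int_0^k e^{\mathrm{i}u}\,\mathrm{d}u.$$
The second factor is bounded by $2$, while for the first I would split the range at $u=1$ (to avoid dividing by the singularity at the origin) and integrate by parts on $[1,k]$; the boundary term produces exactly $\tfrac{\log k}{\mathrm{i}}e^{\mathrm{i}k}$, and the residual $\int_1^k u^{-1}e^{\mathrm{i}u}\,\mathrm{d}u$ converges to a constant with error $O(1/k)$. The delicate point, and the main thing to verify, is that the two oscillatory contributions of size $\log k\,e^{\mathrm{i}k}$ coming from the two factors cancel, leaving $kI_k^{[0,1]}(f_0)=-\mathrm{i}\log k+O(1)$. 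Dividing by $\log k$ then yields a limit of modulus $1$, whence \eqref{eq:a3} with $A_0=1/2$. I expect this cancellation to be the only genuine subtlety; once the substitution and the value of the gamma integral are in hand, the remaining steps are routine.
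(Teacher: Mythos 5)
Your proposal is correct, and it reaches the conclusion by a genuinely different route from the paper. The paper identifies the integrals exactly with special functions: for $\beta\neq 0$ it quotes Gradshteyn--Ryzhik to write $I_k^{[0,1]}(f_\beta)=\frac{1}{1+\beta}\,{}_1F_1(1+\beta,2+\beta,\mathrm{i}k)$ and then invokes the large-argument asymptotics of Kummer's function from Abramowitz--Stegun, which produce \emph{both} competing terms $\Gamma(1+\beta)e^{\mathrm{i}\pi(1+\beta)}(\mathrm{i}k)^{-1-\beta}$ and $e^{\mathrm{i}k}(\mathrm{i}k)^{-1}$ at once, with the sign of $\beta$ deciding which dominates; for $\beta=0$ it uses an exact formula in terms of the sine and cosine integrals $\mathrm{si}$, $\mathrm{ci}$ and their $O(1/k)$ decay. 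You instead work directly: the rescaling $u=kx$ together with the classical contour-rotation value $\int_0^\infty u^\beta e^{\mathrm{i}u}\,\mathrm{d}u=\Gamma(1+\beta)e^{\mathrm{i}\pi(1+\beta)/2}$ handles $\beta\in(-1,0)$ (note your leading constant agrees with the paper's, since $(\mathrm{i}k)^{-1-\beta}=k^{-1-\beta}e^{-\mathrm{i}\pi(1+\beta)/2}$), one integration by parts plus a recycled application of the $\beta<0$ case handles $\beta\in(0,1)$, and for $\beta=0$ your explicit bookkeeping of the boundary term $\frac{\log k}{\mathrm{i}}e^{\mathrm{i}k}$ against $\log k\cdot\frac{e^{\mathrm{i}k}-1}{\mathrm{i}}$ is exactly right: the oscillatory $e^{\mathrm{i}k}\log k$ pieces cancel, leaving $kI_k^{[0,1]}(f_0)=-\mathrm{i}\log k+O(1)$, which matches the paper's $-\frac{1}{k}\left(\frac{\pi}{2}+\mathrm{i}\gamma+\mathrm{i}\log k\right)+O(k^{-2})$ (in the paper the cancellation you flag is hidden inside the exact $\mathrm{si}/\mathrm{ci}$ formula). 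The trade-off: your argument is elementary and self-contained, needing no special-function tables beyond the Gamma integral, while the paper's is shorter given the reference formulas and yields the explicit lower-order constants ($\gamma$, $\pi/2$) for free. One small tightening if you write this up: justify the convergence $\int_0^k u^\beta e^{\mathrm{i}u}\,\mathrm{d}u\to\int_0^\infty u^\beta e^{\mathrm{i}u}\,\mathrm{d}u$ with a one-line integration by parts showing the tail is $O(k^\beta)$, since the conditional convergence is what your $\beta\in(-1,0)$ limit rests on.
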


\begin{proof}
Let us consider first 
$\beta \in (-1,0) \cup (0,1)$.
Using  \cite[(3.761.1),(3.761.6)]{{GradRyz:1994}}, we obtain 
\begin{equation} 
I_{k}^{[0,1]}(f_{\beta}) = \frac{1}{1+\beta} \ {}_1 F_1 \left(1+\beta,
2+\beta, {{\rm i}}k \right),
\label{eq:exact_kummers} \end{equation}
where  $ {}_1 F_1 (a,b,z)$ denotes  the confluent hypergeometric 
function (also called Kummer's function and  denoted $M(a,b,z)$ in 
\cite[Section 13]{AbrSt}). At large values of $|z|$, with $-\pi/2<\arg
z< 3\pi/2$, for fixed $a$ and $b$, the function
$ {}_1 F_1 (a,b,z)$ has the following asymptotics, see \cite[(13.5.1)]{AbrSt}:
\[
  {}_1 F_1 (a,b,z) \ = \ {\Gamma(b)} \left( 
\frac{e^{{{\rm i}}\pi a} z^{-a}}{\Gamma(b-a)} \ + \  
\frac{ e^{z} z^{a-b}}{\Gamma(a)} \right)
\left(1 +O(|z|^{-1})\right) .
\]
Then, from (\ref{eq:exact_kummers}) and since $\Gamma(1+z) = z \Gamma(z)$, we obtain,
\begin{eqnarray*}
I_{k}^{[0,1]}(f_{\beta}) 
&=&  \left(\Gamma(1+\beta) e^{{{\rm i}}\pi(1+\beta)}  ({{\rm i}}k)^{-1-\beta}
+ { e^{{{\rm i}}k}} ({{\rm i}}k)^{-1} \right) \left(1+
O(k^{-1}) \right). 
\end{eqnarray*}
Therefore, for  $k$ sufficiently large, we have, for $\beta \in (-1,0)$, 
\begin{eqnarray*}
\vert I_{k}^{[0,1]}(f_{\beta})\vert  
&\geq & \frac{1}{2}  \left(\Gamma(1+\beta)  k^{-1-\beta}
-  k^{-1} \right) 
\ \geq \ \frac{1}{4} \Gamma(1+\beta)  k^{-1-\beta} ,
\end{eqnarray*}
and for $\beta \in (0,1)$, 
\begin{eqnarray*}
\vert I_{k}^{[0,1]}(f_{\beta})\vert  
&\geq & \frac{1}{2}  \left( k^{-1} - \Gamma(1+\beta)  k^{-1-\beta}
  \right) 
\ \geq \ \frac{1}{4} k^{-1} .
\end{eqnarray*}
These prove the estimate (\ref{eq:a1}).

To verify (\ref{eq:a3}), we use the formulae
\cite[(4.381.1), (4.381.2)]{GradRyz:1994} to obtain 
\[
 I^{[0,1]}_k(f_0)=-\frac{1}{k} \left( \frac{\pi}{2} + {{\rm i}} \gamma + {{\rm i}} \log k\right) 
 + \frac{{\rm i}}{k} \big({\rm ci}(k)+{{\rm i}}{\rm
si}(k)\big)
\]
where ${\rm si}$ and ${\rm ci}$ are the sine and cosine integral functions:
\[{\rm si}(x) := -\frac{\pi}{2} + \int_{0}^{x} \frac{\sin t}{t} \,{{\rm d}t}, \  \ \ \ \ \ \ 
{\rm ci}(x) := \gamma + \log x+ \int_{0}^{x} \frac{\cos t -1 }{t} \,{{\rm d}t},  \]
 and
$\gamma\approx 0.5772$ is the Euler-Macheroti constant. (Note that, in the notation of \cite{AbrSt}, ${\rm si}(x) = -\pi/2 + {\rm Si}(x)$ and ${\rm ci}(x) = 
{\rm Ci}(x)$.) 
Then using the asymptotics for large arguments of ${\rm Si}$ and ${\rm Ci}$ in 
\cite[(5.2.34), (5.2.35)]{AbrSt} and 
\cite[(5.2.8), (5.2.9)]{AbrSt}, we deduce that 
\[
 \lim_{k\to \infty} {\rm si}(k) \ = \mathcal{O}(1/k),\quad
 \lim_{k\to \infty} {\rm ci}(k)\ = \ \mathcal{O}(1/k) .
\]
Thus 
\[
I_{k}^{[0,1]}(f_0) = - \frac{1}{k} \left(\frac{\pi}{2} +{{\rm i}} \gamma+ {{\rm i}} \log k\right) + O\left(\frac{1}{k^2}\right).
\] 
Thus, for all $k$ sufficiently large
\[
| I_{k}^{[0,1]}(f_0)| \ \geq \ \frac{\log k}{2k} ,
\]
proving the estimate (\ref{eq:a3}).
\end{proof}

\subsection{The total error for the composite Filon-Clenshaw-Curtis method} \label{section:total_error}

In Theorem \ref{thm:beta_all} {below we  use \eqref{eq:basic_error_bd}} to 
estimate the total error of the 
composite FCC rule. 
The first contribution 
$|\tilde{e}_1|$ is estimated either by a direct application of  
Lemma \ref{lemma:betaNeg} (when $\beta \in (-1,0]$),  
 or via an integration by parts argument (when 
 $\beta \in (0,1)$).   
{This is done in Lemma \ref{lem:e1} below, but first} the remaining sum on the right-hand side   
is estimated in the following lemma. Since the proof uses   
 fairly classical graded mesh arguments we shall be brief.  
\begin{lemma} \label{lemma:sum_ej}
Let $f \in C^{N+1}_{\beta}[0,1]$, $\beta \in (-1,1)$, let $r \geq 0$ 
and choose 
\begin{equation}\label{defq}
q \ > \ (N+1-r)/(\beta+1-r), \ \ \ \ \ \text{for} \ \ r <1+\beta.
\end{equation}
Then there exists a constant $C$ which depends on $N$, $\beta$ and $q$ such that 
\be \label{eq:result_positive_lemma}
  \sum_{j = 2}^{M} \left|e_j\right| \ \leq\ C
   \, \left(\frac{1}{k}\right)^r \, \left(\frac{1}{M}\right)^{N+1-r} \, \left\|f\right\|_{N+1,\beta},
\ee
\end{lemma}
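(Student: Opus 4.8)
The plan is to bound each term $|e_j|$, $j \ge 2$, by a single application of Corollary \ref{cor:FCC_ab} on the subinterval $[x_{j-1},x_j]$, and then to sum the resulting bounds using standard estimates for the graded mesh \eqref{eq:mesh}. Since the only singularity of $f$ is at the origin and $x_{j-1}>0$ for $j\ge 2$, the function $f$ is $C^{N+1}$ on each such subinterval, so the corollary indeed applies there with the given $r$ (note $r<1+\beta<2$, so $r\in[0,2]$).

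First I would record the pointwise derivative bound coming from the weighted norm: in each of the cases $\beta\in(0,1)$, $\beta\in(-1,0)$ and $\beta=0$, the definitions \eqref{eq:sing_pos}, \eqref{eq:sing_neg} and \eqref{eq:sing_log} give $|f^{(N+1)}(x)|\le x^{\beta-N-1}\|f\|_{N+1,\beta}$ for $x>0$. Because $\beta-N-1<0$, the quantity $x^{\beta-N-1}$ is decreasing, so its maximum over $[x_{j-1},x_j]$ is attained at the left endpoint $x_{j-1}$. Writing $h_j=(x_j-x_{j-1})/2$ for the half-length of the $j$-th subinterval, Corollary \ref{cor:FCC_ab} then yields
\be
|e_j|\ \le\ c_N\,(1/k)^r\, h_j^{\,N+2-r}\, x_{j-1}^{\,\beta-N-1}\,\|f\|_{N+1,\beta}.
\ee

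Next I would insert the graded-mesh estimates. Since $x_j=(j/M)^q$, the mean value theorem gives $x_j-x_{j-1}\le q\,j^{q-1}M^{-q}$, hence $h_j\le \tfrac{q}{2}\,j^{q-1}M^{-q}$, while $x_{j-1}^{\,\beta-N-1}=M^{q(N+1-\beta)}(j-1)^{-q(N+1-\beta)}$ for $j\ge 2$. Substituting these and using $j\le 2(j-1)$ (valid for $j\ge 2$) to combine the powers of $j$ and $j-1$, the powers of $M$ collect to $M^{q(r-1-\beta)}$ and the index-dependent factor becomes $(j-1)^{\alpha}$ with $\alpha=q(1+\beta-r)-(N+2-r)$. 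The grading condition \eqref{defq} is exactly the statement $\alpha>-1$, so $\sum_{j=2}^{M}(j-1)^{\alpha}\le C\,M^{\alpha+1}$. Multiplying by the collected power of $M$, the $q$-dependent exponents cancel since $q(r-1-\beta)+(\alpha+1)=-(N+1-r)$, leaving exactly $\sum_{j=2}^M|e_j|\le C\,(1/k)^r\,M^{-(N+1-r)}\,\|f\|_{N+1,\beta}$, as required; the constant $C$ absorbs $c_N$, the factors $(q/2)^{N+2-r}$ and $2^{(q-1)(N+2-r)}$, and the summation constant, so it depends only on $N$, $\beta$ and $q$.

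The exponent bookkeeping is routine; the one point that must be got right, and the reason the grading parameter is chosen as in \eqref{defq}, is that $\alpha>-1$ is precisely what turns the divergent sum $\sum_j (j-1)^\alpha$ into a clean $M^{\alpha+1}$ behaviour whose $q$-dependence then cancels against $M^{q(r-1-\beta)}$. I expect this to be the main (and only real) obstacle: with $q$ at the threshold one would pick up a spurious $\log M$ factor, and with insufficient grading the surviving power of $M$ would be larger than $-(N+1-r)$, degrading the rate. This is what makes the mesh grading essential rather than cosmetic.
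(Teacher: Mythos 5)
Your proof is correct and is essentially identical to the paper's: both bound each $|e_j|$ via Corollary \ref{cor:FCC_ab} with $\max_{[x_{j-1},x_j]}|f^{(N+1)}|\le x_{j-1}^{\beta-N-1}\|f\|_{N+1,\beta}$, use the mean value theorem to get $h_j\le C\,M^{-1}((j-1)/M)^{q-1}$, and observe that the grading condition \eqref{defq} is exactly $\alpha=q(1+\beta-r)-(N+2-r)>-1$. The only cosmetic difference is that the paper bounds the resulting sum as a Riemann sum for $\int_0^1 x^\alpha\,\mathrm{d}x$, whereas you bound $\sum_{j=2}^M (j-1)^\alpha\le C M^{\alpha+1}$ directly and cancel exponents; these are equivalent.
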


\begin{proof}
In the proof we let $C$ denote a generic constant which may depend on 
$N$, $\beta $ and $q$. 
By Corollary \ref{cor:FCC_ab}, and denoting
 $h_j=(x_j-x_{j-1})/2$, we have 
\begin{eqnarray}
\sum_{j = 2}^{M} \left|e_j\right| &\ \leq \ &
C   \left(\frac{1}{k}\right)^r \, 
\sum_{j = 2}^{M} h_{j}^{N+2-r } \max_{x \in [x_{j-1}, x_{j}]}
\left|f^{(N+1)}(x)\right| \notag \\ &\ \leq\ &C \left(
  \frac{1}{k}\right)^r 
\left\{  \sum_{j = 2}^{M} h_{j}^{N+2-r}
x_{j-1}^{\beta - N-1} \right\} \, \left\|f\right\|_{N+1,\beta} \ . \label{eq:inter_proof_sing}
\end{eqnarray}
A simple  application of the mean-value theorem shows that  
$$h_j \ \leq \ C \frac{1}{M} \left(\frac{j-1}{M}\right)^{q-1} , \quad \text{for}  \quad j = 2,\ldots,M, $$  and hence 
$$
h_j^{N+2-r} x_{j-1}^{\beta-N-1} \ \leq \ C \left(
  \frac{1}{M}\right)^{N+2-r} 
 \left( \frac{j-1}{M} \right)^{\alpha} 
$$
where $\alpha = q(\beta + 1 -r) - (N+2 -r)$, and so 
\be
\sum_{j = 2}^{M} \left|e_j\right| \ \leq\  C
\left(\frac{1}{k}\right)^r \left( \frac{1}{M}\right)^{N+1-r}    
\, \left\{ \sum_{j=2}^M \frac{1}{M} \left( \frac{j-1}{M} \right)^{\alpha} \right\} \,  
\left\|f\right\|_{N+1,\beta} ,
\label{eq:summodej} \ee
 The result follows since the  factor in braces in 
\eqref{eq:summodej} is a Riemann sum for the integral 
$\int_0^1 x^\alpha \, {\rm d}x$ - which is finite,   
since the hypothesis of the lemma ensures that  $\alpha > -1$. 
\end{proof}




\begin{lemma}
\label{lem:e1}
Under the same hypothesis as Lemma \ref{lemma:sum_ej},  
there exists a constant $C$ which depends on  $\beta$ and $q$ such that, {for $N \geq 1$} 
\begin{equation*} 
   \vert \widetilde{e}_1 \vert  \ \leq  \   C  \left(\frac{1}{k}\right)^r \left(\frac{1}{M}\right)^{N+1-r}
\left\{\begin{array}{ll}    
\, \displaystyle{ 
 \,  \, 
\left\|f\right\|_{\ig{2,\beta}}} \ ,  & \text{when} \quad 
\displaystyle{\beta \in (-1,0)\cup (0,1) , } \\
\\ 
   \, \displaystyle{ \left({1+ \log k}\right)^r \,  (\log M)^{1-r}\, 
\left\|f\right\|_{\ig{2,0}} \ } , &   \text{when} \ \displaystyle{ \beta=0}.
\end{array}\right. 
\end{equation*}
\end{lemma}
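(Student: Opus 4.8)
The plan is to split according to the two cases in the definition \eqref{eq:first_approx_def} of $\widetilde{I}^{[x_0,x_1]}_k(f)$ and, in each case, to reduce the bound on $|\widetilde e_1|$ to the local size estimates of Lemmas \ref{lemma:betaNeg}--\ref{lemma:betaPos} evaluated at $\varepsilon = x_1 = M^{-q}$, converting the resulting factor $x_1^{1+\beta-r} = M^{-q(1+\beta-r)}$ into the claimed $M^{-(N+1-r)}$ by means of the grading hypothesis $q(1+\beta-r) > N+1-r$ from Lemma \ref{lemma:sum_ej}.

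For $\beta \in (-1,0]$ this is immediate, since then $\widetilde I^{[x_0,x_1]}_k(f)=0$ and so $\widetilde e_1 = I^{[0,x_1]}_k(f)$. I would apply Lemma \ref{lemma:betaNeg} directly with $\varepsilon = x_1$ and $s=r$; for $\beta\in(-1,0)$ this gives $|\widetilde e_1|\le C(1/k)^r x_1^{1+\beta-r}\|f\|_{1,\beta}$, and the grading hypothesis finishes the estimate (using $\|f\|_{1,\beta}\le\|f\|_{2,\beta}$). For $\beta=0$ the logarithmic estimate \eqref{lemma:betalog}, taken at $\varepsilon=x_1=M^{-q}$ so that $\varepsilon+\varepsilon|\log\varepsilon|\le C M^{-q}\log M$, produces exactly the factor $(1+\log k)^r(\log M)^{1-r}$ appearing in the statement.

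The substantive case is $\beta\in(0,1)$, where $\widetilde I^{[x_0,x_1]}_k(f)=I^{[x_0,x_1]}_{k,1}(f)$, and here I would argue by integration by parts. In the oscillatory regime $x_1k\ge 1$ one has $\widetilde e_1 = I^{[0,x_1]}_k(g)$ with $g=f-Q_1^{[x_0,x_1]}f$, and, decisively, $g(x_0)=g(x_1)=0$. A single integration by parts therefore generates no boundary terms and yields $\widetilde e_1 = -(ik)^{-1}\int_0^{x_1}g'(x)\exp(ikx)\,\mathrm{d}x$, where $g'=f'-c$ with $c=(f(x_1)-f(0))/x_1$. Splitting $\int_0^{x_1}g'(x)\exp(ikx)\,\mathrm{d}x = I^{[0,x_1]}_k(f') - c\,I^{[0,x_1]}_k(1)$, I would estimate the first term by applying Lemma \ref{lemma:betaNeg} to $f'$, which lies in $C^1_{\beta-1}[0,1]$ with exponent $\beta-1\in(-1,0)$, and control the constant term from $|I^{[0,x_1]}_k(1)|\le 2/k$, $|c|\le\beta^{-1}x_1^{\beta-1}\|f\|_{1,\beta}$ and the regime inequality $x_1k\ge 1$. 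Together with the trivial bound $|\widetilde e_1|\le\int_0^{x_1}|g|\le Cx_1^{1+\beta}\|f\|_{1,\beta}$ at $r=0$ and interpolation in $r$, this gives $|\widetilde e_1|\le C(1/k)^r x_1^{1+\beta-r}\|f\|_{2,\beta}$, after which the grading hypothesis concludes as before. The non-oscillatory regime $x_1k<1$ (which, for fixed $k$, is what actually occurs as $M\to\infty$) is handled separately and more elementarily: there the rule integrates the linear interpolant of $F=f\exp(ik\,\cdot\,)$, so $\widetilde e_1$ is the trapezoidal error $\int_0^{x_1}(F-Q_1F)\,\mathrm{d}x$, which I would represent through its Peano kernel $\tfrac12 x(x_1-x)$ and bound using that this kernel vanishes at the origin.

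The main obstacle is that $f''$ is not integrable at the origin, since it behaves like $x^{\beta-2}$ with $\beta-2<-1$, so one cannot Taylor-expand the interpolation error or iterate integration by parts down to the second derivative. The device that removes this difficulty is the double vanishing $g(x_0)=g(x_1)=0$ of the interpolation error: after one integration by parts all boundary contributions disappear and only first-derivative quantities remain, and $f'$ carries merely the integrable singularity $x^{\beta-1}$, to which Lemma \ref{lemma:betaNeg} applies. In the non-oscillatory regime the analogous role is played by the vanishing of the Peano kernel $x(x_1-x)$ at the origin, which absorbs one power of $x$ and thereby tames the $x^{\beta-2}$ growth of $f''$.
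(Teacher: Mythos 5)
Your handling of the cases $\beta\in(-1,0]$ and of the oscillatory regime $x_1k\ge 1$ reproduces the paper's proof essentially step for step: the paper also disposes of $\beta\in(-1,0]$ by Lemma \ref{lemma:betaNeg} with $\varepsilon=x_1$ and $s=r$ (with the same $\log M$, $\log k$ bookkeeping for $\beta=0$), and for $\beta\in(0,1)$, $x_1k\ge1$ it integrates by parts exactly as you do, exploiting the vanishing of $f-Q_1^{[0,x_1]}f$ at both endpoints to remove boundary terms, estimating $\int_0^{x_1}f'(x)e^{{\rm i}kx}\,{\rm d}x$ by Lemma \ref{lemma:betaNeg} applied to $f'\in C^1_{\beta-1}[0,1]$ (the paper takes $s=\beta$ there), bounding the constant slope by $\beta^{-1}x_1^{\beta-1}\|f\|_{1,\beta}$, interpolating against the trivial bound $\frac{2}{\beta}x_1^{1+\beta}\|f\|_{1,\beta}$, and finishing with the grading inequality $q(1+\beta-r)>N+1-r$. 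All of that is correct and is the paper's argument.

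The gap is in your non-oscillatory regime $x_1k<1$. With $F=f\,e^{{\rm i}k\,\cdot}$ your Peano representation gives $\widetilde e_1=-\frac12\int_0^{x_1}x(x_1-x)F''(x)\,{\rm d}x$, but you only discuss the $f''$ contribution, which the vanishing of the kernel at the origin indeed tames to $O(x_1^{1+\beta})\|f\|_{2,\beta}$. However $F''=(f''+2{\rm i}kf'-k^2f)e^{{\rm i}kx}$, and the $-k^2f$ term contributes $\frac{k^2}{2}\int_0^{x_1}x(x_1-x)|f(x)|\,{\rm d}x\sim (kx_1)^2\,x_1\sup|f|$, which is of order $x_1$ when $kx_1$ is of order one and $f(0)\ne 0$ (membership in $C^{N+1}_\beta$ with $\beta>0$ controls only $\sup|f|$ and does not force $f(0)=0$; try $f\equiv 1$). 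Since $x_1\gg (1/k)^r x_1^{1+\beta-r}$ in that window, this term is not absorbed by the claimed bound: your argument closes only for fixed $k$ as $M\to\infty$ (where $(kx_1)^{2+r}\le Cx_1^{\beta}$ eventually holds), not uniformly over the regime $x_1k<1$ that this case must cover. For comparison, the paper never touches second derivatives here: it applies its first-derivative interpolation estimate \eqref{eq:02:thm:beta_all} to $f_k=f\,e^{{\rm i}k\,\cdot}$, obtaining $|\widetilde e_1|\le\frac{2}{\beta}x_1^{1+\beta}\|f_k\|_{1,\beta}$, and then converts $\|f_k\|_{1,\beta}$ into $(1/k)^r x_1^{-r}\|f\|_{1,\beta}$ — a conversion that is asserted rather than proved, and which faces the same obstruction through the ${\rm i}kf$ term in $f_k'$ (the factor $kx_1^{1-\beta}=(kx_1)x_1^{-\beta}$ is not controlled by $(kx_1)^{-r}$ when $kx_1\sim 1$). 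So your explicit computation has in effect exposed a term that the paper's terse estimate glosses over; to complete your case you would need an extra ingredient, such as restricting the non-oscillatory branch to $kx_1\le x_1^{\beta/(2+r)}$, assuming $f(0)=0$, or a genuine bound on the $C^1_\beta$-norm of $f_k$ over $[0,x_1]$.
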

\begin{proof}
{Throughout we use the fact that $x_0 = 0$. 
Consider first  $\beta\in(0,1)$ and note  that 
\begin{eqnarray}
\vert (Q_1^{[0,x_1]}f)' \vert \ &=& \ \left\vert \frac{f(x_1) - f(0)}{x_1}\right\vert  \ =\ \left\vert  \frac{1}{x_1}\int_0^{x_1} f'(x) \rd x \right\vert \ \leq 
\  \frac{1}{x_1}\int_0^{x_1} \vert f'(x)\vert  \rd x \nonumber
\\
&\le&    \frac{1}{x_1} \,  \bigg[\int_0^{x_1} x^{\beta-1}\,{\rm d}x\bigg]\|f\|_{1,\beta}  
\ = \   {\frac{1}{\beta}}  x_1^{\beta-1} \|f\|_{1,\beta}  \ . \label{eq:Q1prime}
\end{eqnarray}
Moreover,   for any  $t\in[0,x_1]$ and  any   ${{f}}\in C_\beta^ 1[0,1]$, we have 
\begin{eqnarray}
|f(t)-Q_{1}^{[0,x_1]}f(t)|
&=&\bigg|\int_0^{t}(f-Q_{1}^{[0,x_1]}f)'(x)|\,{\rm d}x\bigg|\nonumber\\
&\le& \int_0^{x_1}  |f'(x)|\,{\rm d}x+x_1 \big|\big(Q_{1}^{[0,x_1]}f\big)'\big|
\leq \ 
\frac{2}{\beta}  x_1^{\beta} \|f\|_{1,\beta}\ .
\label{eq:02:thm:beta_all} 
\end{eqnarray}
Then,  
from 
\eqref{eq:first_approx_def}, we have from \eqref{eq:02:thm:beta_all}, {when \ig{$f \in C^{1}_\beta[0,1]$},} 
\begin{eqnarray}
 |\widetilde{e}_1 |&=& \bigg|\int_0^{x_1} \Big(f(x)-Q_{1}^{[0,x_1]}f(x)\Big)\exp({\rm i}kx)\, {\rm d}x\bigg| 
 \ {\le \  \frac{2}{\beta}  \, x_1^{1+\beta}\,  \|f\|_{\ig{1,\beta}}}
 \ . 
 \label{eq:03:thm:beta_all} 
\end{eqnarray}
On the other hand, integrating  the formula for $\widetilde{e}_1$ 
by parts, we obtain
\begin{equation}
\widetilde{e}_1 \ = \ - \frac{1}{\ri k}  
\int_0^{x_1} \left(f(x) - (Q^{[0,x_1]}_1 f)(x)\right)'\exp({{\rm i}}kx)  \rd x 
\ . \label{eq:intbyparts}
\end{equation}  
Since   $f'\in C_{\beta-1}^N$,  to treat 
the first term on the right-hand side of   
\eqref{eq:intbyparts} we can use     
Lemma \ref{lemma:betaNeg} with  $\beta$ replaced by $\beta-1$ 
and $s$ chosen to be  $\beta$, thus obtaining   
\begin{equation}\label{eq:1stterm}
\left\vert\int_0^{x_1} f'(x) \exp({{\rm i}}kx)  \rd x\right\vert\ \leq \ C_\beta  \left(\frac{1}{k}\right)^{\beta} \Vert f' \Vert_{1,\beta-1} \leq \ C_\beta  
\left(\frac{1}{k}\right)^{\beta} \Vert f \Vert_{2,\beta}\ . 
\end{equation}
Moreover, to treat the second term in \eqref{eq:intbyparts}, since 
$(Q^{[0,x_1]}_1 f)'$ is constant, we have by \eqref{eq:Q1prime},  
\begin{equation}\label{eq:2ndterm}
\left\vert \int_0^{x_1} (Q^{[0,x_1]}_1 f)' \exp({{\rm i}}kx)  \rd x \right\vert
\ = \   \left\vert (Q^{[0,x_1]}_1 f)'\right\vert \,  \left\vert \int_0^{x_1}  \exp({{\rm i}}kx)  \rd x\right\vert\ \leq \ \frac{x_1^{\beta-1}}{\beta k}\, \Vert f \Vert_{1,\beta} \ . 
\end{equation}
Hence combining \eqref{eq:1stterm} and \eqref{eq:2ndterm} with  \eqref{eq:intbyparts}, we obtain 
\begin{equation}
\label{eq:04:thm:beta_all} 
| \widetilde{e}_1| \ \le \ \igg{C_{\beta}'} \left[
\left(\frac{1}{k} \right)^{1+\beta}   +  \frac{x_1^{\beta-1}}{k^2} \right] 
\|f\|_{2,\beta}\ \leq  \ \igg{C_\beta'} \left(\frac{1}{k}\right)^{1+ \beta} \left[ 1 + (x_1 k)^{\beta-1}\right] \Vert f \Vert_{\ig{2},\beta} \ . 
\end{equation}

Hence if \igg{$x_1 k\geq 1$}  we can  interpolate  
\eqref{eq:04:thm:beta_all} and \eqref{eq:03:thm:beta_all}, 
to deduce that 
\be \label{eq:06:thm:beta_all}
 \vert \widetilde{e}_1 \vert \ \le\  \igg{C_\beta''} \bigg(\frac1{k}\bigg)^{r} x_1^{1+\beta-r}  \|f\|_{N+1,\beta}\le \igg{C_\beta''} \bigg(\frac1{k}\bigg)^{r} \bigg(\frac{1}{M}\bigg)^{N+1-r} \|f\|_{\ig{2},\beta}\ , 
\ee
for any $r\in[0,1+\beta]$. 

On the other hand, 
if \igg{$kx_1<1$}, and defining $f_k(x):=f(x)\exp({\rm i}kx)$,  \eqref{eq:02:thm:beta_all} yields 
\begin{eqnarray}
|\widetilde{e}_1 | \ & = & \  \bigg|\int_0^{x_1} \big(f_k(x)-Q_{1}^{[0,x_1]}f_k(x)\big) 
 {\rm d}x\bigg|\  \le  \  \igg{\frac{2}{\beta}}  x_1^{1+\beta} \|f_k\|_{1,\beta}\nonumber \\
&  \le &  \ 
  \igg{\frac{2}{\beta}}\bigg(\frac{1}{k}\bigg)^{r} x_1^{1+\beta -r}  \|f\|_{\ig{1},\beta} 
\ \le \ \igg{\frac{2}{\beta}} \bigg(\frac1{k}\bigg)^{r} \bigg(\frac{1}{M}\bigg)^{N+1-r} \|f\|_{\ig{1},\beta}. \label{eq:05:thm:beta_all} 
\end{eqnarray} 
and   \eqref{eq:06:thm:beta_all}, \eqref{eq:05:thm:beta_all} prove 
the result for  $\beta\in(0,1)$\ . 
    }

For $\beta \in (-1,0]$ note that 
the integral over the first subinterval is approximated by zero,
and so the result follows readily from Lemma \ref{lemma:betaNeg} (with $\varepsilon = x_1$ and $s = r$).
\end{proof}

The proof of the following result now follows directly from Lemmas \ref{lemma:sum_ej} and  \ref{lem:e1}. 
\begin{theorem}
\label{thm:beta_all}
Under the same hypothesis as Lemma \ref{lemma:sum_ej},
there exists a constant $C$ which depends on $N$,  $\beta$ and $q$ such that 
\begin{equation*} 
   E_{\igg{k,N,M,q}}(f) \ \leq  \   C  \left(\frac{1}{k}\right)^r \left(\frac{1}{M}\right)^{N+1-r}
\left\{\begin{array}{ll}    
\, \displaystyle{ 
 \,  \, 
\left\|f\right\|_{N+1,\beta}} \ ,  & \hspace{-1in} \text{when} \quad 
\displaystyle{\beta \in (-1,0)\cup (0,1) , } \\
\\ 
   \, \displaystyle{ \left({1+ \log k}\right)^r \,  (\log M)^{1-r}\, \left\|f\right\|_{N+1,0} \ } , &   \text{when} \ \displaystyle{ \beta=0}.
\end{array}\right. 
\end{equation*}
\end{theorem}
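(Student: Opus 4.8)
The plan is to assemble the result directly from the two preceding lemmas through the triangle-inequality splitting already recorded in \eqref{eq:basic_error_bd}, namely $E_{k,N,M,q}(f)\le|\tilde{e}_1|+\sum_{j=2}^M|e_j|$. The sum over the non-singular subintervals is controlled at once by Lemma \ref{lemma:sum_ej}, which under the grading condition \eqref{defq} yields $\sum_{j=2}^M|e_j|\le C\,(1/k)^r(1/M)^{N+1-r}\|f\|_{N+1,\beta}$. The first-subinterval contribution $|\tilde{e}_1|$ is controlled by Lemma \ref{lem:e1}, which supplies \emph{exactly the same} prefactor $(1/k)^r(1/M)^{N+1-r}$, carrying the norm $\|f\|_{2,\beta}$ in the cases $\beta\neq0$ and an extra logarithmic factor $(1+\log k)^r(\log M)^{1-r}$ multiplying $\|f\|_{2,0}$ when $\beta=0$. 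Thus both pieces already exhibit the target decay rate, and it remains only to add them and tidy up the norms.

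First I would handle $\beta\in(-1,0)\cup(0,1)$. Adding the two bounds gives $(1/k)^r(1/M)^{N+1-r}$ times $\|f\|_{N+1,\beta}+\|f\|_{2,\beta}$, and to collapse this to a single factor $\|f\|_{N+1,\beta}$ I would invoke monotonicity of the weighted norms in the smoothness index: from \eqref{eq:sing_pos} and \eqref{eq:sing_neg}, $\|\cdot\|_{m,\beta}$ is a maximum over $j\le m$, hence nondecreasing in $m$, and since $N\ge1$ forces $N+1\ge2$ we have $\|f\|_{2,\beta}\le\|f\|_{N+1,\beta}$. Absorbing the numerical constant then produces the claimed estimate.

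For the logarithmic case $\beta=0$ the same monotonicity (now reading off \eqref{eq:sing_log}) gives $\|f\|_{2,0}\le\|f\|_{N+1,0}$, so the two contributions combine into $(1/k)^r(1/M)^{N+1-r}$ times the sum of a clean term (from the interior subintervals) and a log-weighted term $(1+\log k)^r(\log M)^{1-r}$ (from $\tilde{e}_1$), each multiplying $\|f\|_{N+1,0}$. The one point requiring care is that the stated bound attaches the logarithmic prefactor to the whole expression, so I must check the clean term is dominated by the log-weighted one. This is where I expect the only genuine (though minor) work: using $k\ge1$ one has $(1+\log k)^r\ge1$, and using $M\ge2$ one has $\log M\ge\log2\in(0,1)$, whence $(\log M)^{1-r}\ge(\log2)^{1-r}\ge\log2>0$ for $r\in[0,1)$; therefore $(1+\log k)^r(\log M)^{1-r}$ is bounded below by a positive constant uniformly in $k,M,r$, and the non-logarithmic term is absorbed into the logarithmic one at the cost of enlarging $C$.

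I anticipate no substantive obstacle here: all the analytic effort — the graded-mesh Riemann-sum argument for the interior subintervals and the integration-by-parts together with interpolation estimates for the singular first subinterval — has already been discharged in Lemmas \ref{lemma:sum_ej} and \ref{lem:e1}. The theorem is accordingly a bookkeeping assembly, and the only items deserving explicit mention are the norm-index monotonicity that reduces $\|f\|_{2,\beta}$ to $\|f\|_{N+1,\beta}$ and, in the case $\beta=0$, the uniform lower bound on the logarithmic prefactor that lets the non-logarithmic piece be absorbed.
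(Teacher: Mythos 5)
Your proposal is correct and follows exactly the paper's route: the paper proves Theorem \ref{thm:beta_all} simply by combining Lemmas \ref{lemma:sum_ej} and \ref{lem:e1} through the splitting \eqref{eq:basic_error_bd}. Your two bookkeeping observations --- the monotonicity $\|f\|_{2,\beta}\le\|f\|_{N+1,\beta}$ (valid since $N\ge1$) and the uniform lower bound on $(1+\log k)^r(\log M)^{1-r}$ for $k\ge1$, $M\ge2$ that absorbs the non-logarithmic term when $\beta=0$ --- are precisely the steps the paper leaves implicit.
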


\section{Nonlinear oscillators }
\label{sec:Composite_Stationary}
In this section we return to the integral $I_k^{[a,b]}(f,g)$ 
defined in (\ref{eq:theproblem}), 
and consider a general nonlinear $g$. We will assume for simplicity that 
$g \in C^{\infty}[a,b]$. For less smooth $g$ the arguments will be 
analogous but the exposition would be more technical. 
Our  methods will be based on the  change of variable 
\begin{equation}
\tau = g(x).   \label{eq:transform}
\end{equation}  
If $g$ has no stationary points (i.e. $g'$ does not vanish), then 
$g^{-1} \in C^{\infty}[a,b]$,   $(g^{-1})' =  1/(g' \circ g^{-1})$  
and  
\begin{equation}
I_{k}^{[a,b]}(f,g) \ = \ I_{k}^{[g(a),g(b)]}(F), \quad \text{with} \quad 
F =   (f \circ 
g^{-1}) \vert (g^{-1})'\vert   =  (f \circ 
g^{-1}) \vert  (g' \circ g^{-1}) \vert^{-1}  .
\label{eq:capitalFdef}
\end{equation}
Now, assuming also   that  $f \in C^{\infty}[a,b]$,
then  (\ref{eq:capitalFdef}) can
be computed using the FCC rules described in \S \ref{sec:CleCur}, 
with the additional cost being the
evaluation of the inverse function $g^{-1}$ at the quadrature points. 
Moreover if  $f$ has singularities 
then these induce singularities in $F$ and the composite FCC rules in
\S \ref{sec:Composite}  could be used instead. (\igg{Here we assume 
implicitly that $g(a) > g(b)$. If $g(b) > g(a)$ then the integral can
be transferred to one over the interval $[g(b), g(a)]$ by a simple
affine change of variables. We make similar implicit assumptions below.})


If now $g$ has a 
stationary point at one or more $\xi \in [a,b]$,  the transformation  (\ref{eq:transform})
may still be applied,  but    
  singularities appear in $F$ at the points $g(\xi)$. 
To describe these, we may,  
without loss of generality,
consider a 
single stationary point $\xi \in [a,b]$ of order $n\geq 1$ with property 
\begin{equation}
g'(\xi) = g''(\xi) = \ldots = g^{(n)}(\xi) \, = \, 0, \ \    
g^{(n+1)}(\xi)  >   0  \ \ \text{and} \ \   
g'(x) \not =  0,   \ \text{for}\     x\in [a,b]\setminus \{\xi\}. \label{eq:statpointorden}
\end{equation}
Then $g$ is monotone on each of the intervals $[a,\xi)$ and $(\xi,b]$.  
The change of variables (\ref{eq:transform}) can be applied on each interval 
separately to obtain 
\begin{eqnarray}
I^{[a,b]}_{k}(f,g) 
&=& \left(\int_{g(a)}^{g(\xi )} + \int_{g(\xi)}^{g(b)}\right) F(\tau) \exp({{\rm i}}k
\tau) {\rm d}\tau 
\label{eq:sing_int_1d}
\end{eqnarray}
with $F$ as in \eqref{eq:capitalFdef}.  (One of the integrals in \eqref{eq:sing_int_1d} is interpreted as void if $\xi = a$ or $b$.) The regularity of the resulting function  
$F$ is summarised in the following theorem.  Here and throughout the rest of 
this section we use the convenient notation $\alpha = 1/(n+1)$. 
\begin{theorem} \label{thm:derivofF}
Assume that $f$ and $g$ are in $C^{\infty}[a,b]$ and that 
$g$ has a single stationary point $\xi$ \igg{of order $n$} as in \eqref{eq:statpointorden}. Then for each $p \in \mathbb{N}_0 := \mathbb{N}\cup \{ 0\}$, there exists $C_p >0$ such that
\be
\left|F^{(p)} (\tau)\right| \ \leq \ C_{p} \left|\tau - g(\xi)
\right|^{\alpha- p-1}, \ \ \ \ \ \ \tau \in [g(a), g(\xi))\cup (g(\xi), g(b)] \ .
\label{eq:F_1_deriv_est}
\ee
\end{theorem}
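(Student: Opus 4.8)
The plan is to analyze the behaviour of $F$ near the stationary point $\xi$ by working in the original variable $x$ and exploiting the change of variables $\tau = g(x)$. Since $F = (f\circ g^{-1})\,|g'\circ g^{-1}|^{-1}$, the singularity in $F$ at $\tau = g(\xi)$ arises entirely from the vanishing of $g'$ at $\xi$. First I would establish the local behaviour of $g$ near $\xi$. By Taylor expanding and using \eqref{eq:statpointorden}, we have $g(x) - g(\xi) \sim \frac{g^{(n+1)}(\xi)}{(n+1)!}(x-\xi)^{n+1}$, so that $|\tau - g(\xi)|$ scales like $|x - \xi|^{n+1}$, i.e.\ $|x - \xi| \sim |\tau - g(\xi)|^{\alpha}$ with $\alpha = 1/(n+1)$. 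Correspondingly $g'(x) \sim (x-\xi)^{n}$, so $|g'(g^{-1}(\tau))|^{-1} \sim |x-\xi|^{-n} \sim |\tau - g(\xi)|^{-n\alpha} = |\tau-g(\xi)|^{\alpha - 1}$, which already recovers \eqref{eq:F_1_deriv_est} in the case $p = 0$ (since $f$ is bounded).

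The key technical device I would use is the chain rule for derivatives with respect to $\tau$. Writing $\frac{\rd}{\rd \tau} = (g^{-1})'(\tau)\frac{\rd}{\rd x} = \frac{1}{g'(x)}\frac{\rd}{\rd x}$, each differentiation in $\tau$ introduces a factor $1/g'(x)$. The cleanest route is an inductive argument: I would prove that $F^{(p)}(\tau)$, expressed in the $x$ variable, is a finite sum of terms each of the form (smooth function)$\times (g'(x))^{-j}$ with appropriate powers $j$ controlled by $p$, or more precisely that $|F^{(p)}(\tau)| \le C_p |x-\xi|^{\alpha_0}$ for some exponent $\alpha_0$ that decreases by $(n+1)$ with each differentiation. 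Each application of $\frac{1}{g'(x)}\frac{\rd}{\rd x}$ to a term behaving like $|x-\xi|^{s}$ produces a term behaving like $|x-\xi|^{s - n}|x-\xi|^{-1} = |x-\xi|^{s-n-1}$ (the $-1$ from differentiation, the $-n$ from the $1/g'$ factor). Starting from $F \sim |x-\xi|^{(\alpha - 1)(n+1)} = |x-\xi|^{-n}$ in the natural exponent, $p$ differentiations give $|x-\xi|^{-n - p(n+1)}$, and converting back via $|x-\xi| \sim |\tau - g(\xi)|^{\alpha}$ yields $|\tau - g(\xi)|^{\alpha(-n-p(n+1))} = |\tau-g(\xi)|^{\alpha - 1 - p}$, exactly matching \eqref{eq:F_1_deriv_est}.

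To make the inductive bookkeeping rigorous I would introduce a class of ``admissible'' functions, say finite combinations of terms $\phi(x)\,(g'(x))^{-j}$ where $\phi \in C^\infty$ and $j$ is a nonnegative integer, and show this class is closed under the operation $\frac{1}{g'}\frac{\rd}{\rd x}$, with the maximal power of $1/g'$ increasing by exactly $2$ at each step (one from the explicit $1/g'$ and one from differentiating an existing $(g')^{-j}$ factor, which produces $g''(g')^{-j-1}$). I would then bound $|g'(x)|$ from below by $c|x-\xi|^n$ and $|g''(x)|, \ldots$ from above by the corresponding powers of $|x-\xi|$, using \eqref{eq:statpointorden} and smoothness, so that the dominant term controls the whole sum. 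The main obstacle is precisely this combinatorial tracking: one must verify that among all the terms generated, the most singular one has exactly the claimed exponent and that no worse cancellation-free term appears. A careful statement would note that $g'(x)/(x-\xi)^n$ extends to a smooth, nonvanishing function near $\xi$ (by \eqref{eq:statpointorden}), which lets me replace $g'(x)$ by $(x-\xi)^n$ times a smooth unit and reduces the whole computation to the model case $g(x) = (x-\xi)^{n+1}$, where the bounds can be checked directly and then transferred back.
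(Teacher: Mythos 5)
Your proposal is correct in substance, but your primary route differs from the paper's. The paper works entirely in the $\tau$ variable: its Lemma \ref{lemma:psi_inv_est} writes $g^{-1}(\tau) = h_\xi^{-1}\big((\tau-g(\xi))^{\alpha}\big)$, where $h_\xi(x) = (T_\xi(x))^{\alpha}(x-\xi)$ is a smooth diffeomorphism built from the Taylor factorization $g(x)-g(\xi) = (x-\xi)^{n+1}T_\xi(x)$, and then bounds $(g^{-1})^{(p)}$ by applying the Fa\`a di Bruno formula to this composition, where all the singular behaviour sits in the explicit function $\psi(\tau)=(\tau-g(\xi))^{\alpha}$; the theorem itself follows by Leibniz on $F = \pm (f\circ g^{-1})(g^{-1})'$ plus a second application of Fa\`a di Bruno, with the most singular contribution coming from the $l=0$ term. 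Your default argument instead inducts in the $x$ variable with the derivation $\frac{1}{g'}\frac{\rd}{\rd x}$ and tracks the generated terms combinatorially. That route does work, but only with the refinement you flag near the end: for $n\geq 2$ it is \emph{not} enough to treat the numerator factors $g'',g''',\dots$ as merely bounded — a crude count gives terms like $(g')^{-(1+2p)}$, i.e.\ exponent $(\alpha-1)(1+2p)$, which is strictly worse than the claimed $\alpha-1-p$ whenever $\alpha<1/2$; one must use $|g^{(m)}(x)|\leq C|x-\xi|^{n+1-m}$ (from \eqref{eq:statpointorden} and Taylor) so that every differentiation step costs at most $n+1$ in the $|x-\xi|$ exponent, exactly as your bookkeeping asserts. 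One small correction to your closing remark: factoring $g'(x)=(x-\xi)^n u(x)$ with $u$ a smooth unit does not by itself reduce matters to the model $g(x)=(x-\xi)^{n+1}$; what is needed is the factorization of $g-g(\xi)$ itself, i.e.\ the smooth diffeomorphism $h_\xi$ with $\tau-g(\xi)=h_\xi(x)^{n+1}$ — and once you write that down, your ``model case plus transfer'' fallback becomes essentially identical to the paper's proof, with the direct check in the model case playing the role of Fa\`a di Bruno applied to $\psi$. What your $x$-variable induction buys is the avoidance of Fa\`a di Bruno altogether; what the paper's route buys is a reusable standalone estimate on $(g^{-1})^{(p)}$ (Lemma \ref{lemma:psi_inv_est}), which the paper also exploits later in \S\ref{subsec:accuracy} for the accurate numerical evaluation of $g^{-1}$ near the stationary point.
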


The proof of Theorem \ref{thm:derivofF} requires Lemma \ref{lemma:psi_inv_est}
(below) and  both these  results require the 
Fa\`{a} di Bruno formula  for 
  the derivatives of the 
composition 
of two  univariate functions.  
For $p \in \mathbb{N}_0$, let $f^{(p)}$ denote the $p$th derivative of any 
sufficiently  differentiable  function $f$.  Then,  if  $\phi$,   $\psi$ are 
suitably smooth functions 
 and    
 the composition $\phi\circ \psi$ is well-defined,  we have the formula 
\be
(\phi\circ \psi)^{(p)} \ = \displaystyle{\sum} \, \frac{p!}{m!} \left(\phi^{(\vert m \vert)}\circ\psi\right)\,  \left(
\prod_{j = 1}^{p} \left(\psi^{(j)}\right)^{m_j}
\right)  , \label{eq:A}\ee
where  the sum in \eqref{eq:A} is over all multiindices ${m} \in (\mathbb{N}_{0})^p$ 
which satisfy 
$
m_1+2m_2+\ldots+pm_p = p  
$. Moreover   $\vert m \vert = m_1 + m_2 + \ldots + m_p$\,  and \,  $m! = m_1! m_2! \ldots m_p!$.  A suitable reference is (\cite[Theorem 2]{Ro:80}). 
\begin{lemma} \label{lemma:psi_inv_est}
Assume $g \in C^\infty[a,b]$ and that $g$ has a single stationary point 
$\xi$ \igg{of order $n$} as  in \eqref{eq:statpointorden}. Then, 
for all $p \in \mathbb{N}$, there exists a constant $C_p>0$  such that 
\be
\left| \left( g^{-1}\right)^{(p)}(\tau)\right| \ \leq \ C_p \left|\tau -
g(\xi) \right|^{\alpha -p} . \label{eq:est_on_inv_psi}
\ee
\end{lemma}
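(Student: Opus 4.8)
The plan is to reduce \eqref{eq:est_on_inv_psi} to a purely local statement near the stationary point, since away from $\xi$ the function $g^{-1}$ is already smooth with bounded derivatives. Precisely, on any region of the form $\delta \leq |\tau - g(\xi)| \leq \max\{|g(a) - g(\xi)|, |g(b) - g(\xi)|\}$ the derivative $g'$ is bounded away from zero, so every derivative of $g^{-1}$ is bounded there, while the right-hand side of \eqref{eq:est_on_inv_psi} is bounded below by a positive constant (recall $\alpha - p < 0$ for $p \geq 1$); hence the estimate holds on such a region for a suitable $C_p$. It therefore suffices to prove \eqref{eq:est_on_inv_psi} for $|\tau - g(\xi)|$ small.

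For the local analysis I would first exhibit a smooth factorization of $g$ near $\xi$. Writing the Taylor remainder in integral form and using \eqref{eq:statpointorden}, one obtains $g(x) - g(\xi) = (x-\xi)^{n+1} h(x)$ with $h$ smooth near $\xi$ and $h(\xi) = g^{(n+1)}(\xi)/(n+1)! > 0$. Since $h$ is positive in a neighbourhood of $\xi$, the function $\psi(x) := (x-\xi)\, h(x)^{\alpha}$ is smooth there, vanishes at $\xi$, and satisfies $\psi'(\xi) = h(\xi)^{\alpha} > 0$; by the inverse function theorem $\psi$ is a smooth diffeomorphism of a neighbourhood of $\xi$ onto a neighbourhood of $0$, with smooth inverse $\psi^{-1}$. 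By construction $g(x) - g(\xi) = \psi(x)^{n+1}$, so letting $\sigma(\tau)$ denote the appropriate signed branch of $(\tau - g(\xi))^{\alpha}$ on each of the intervals $[g(a),g(\xi))$ and $(g(\xi),g(b)]$, we have $\psi(g^{-1}(\tau)) = \sigma(\tau)$ and hence the key representation $g^{-1} = \psi^{-1}\circ\sigma$ on a punctured neighbourhood of $g(\xi)$.

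The bound then follows by differentiating this composition with the Fa\`{a} di Bruno formula \eqref{eq:A}, taking the outer function to be the smooth map $\psi^{-1}$ and the inner function to be $\sigma$. Since $\psi^{-1}$ is smooth and $\sigma(\tau) \to 0$ as $\tau \to g(\xi)$, every factor $(\psi^{-1})^{(|m|)}(\sigma(\tau))$ is bounded, while the elementary computation $\sigma^{(j)}(\tau) = \alpha(\alpha-1)\cdots(\alpha-j+1)(\tau - g(\xi))^{\alpha - j}$ gives $|\sigma^{(j)}(\tau)| \leq C_j |\tau - g(\xi)|^{\alpha - j}$. A typical term of \eqref{eq:A} is therefore bounded by a constant times $\prod_{j=1}^{p} |\tau - g(\xi)|^{(\alpha - j)m_j} = |\tau - g(\xi)|^{\alpha|m| - p}$, where I have used the constraint $\sum_j j m_j = p$ in \eqref{eq:A}.

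The decisive observation---and the one point requiring care---is that each admissible multiindex in \eqref{eq:A} has $|m| \geq 1$ for $p \geq 1$, so that $\alpha|m| - p \geq \alpha - p$; since $|\tau - g(\xi)| < 1$ this means every term is dominated by $|\tau - g(\xi)|^{\alpha - p}$, the worst case $|m| = 1$ being realised by the single term $(\psi^{-1})'(\sigma)\,\sigma^{(p)}$. Summing the finitely many terms yields \eqref{eq:est_on_inv_psi} near $g(\xi)$, which together with the first paragraph completes the proof. I expect the main obstacles to be the rigorous justification of the smooth factorization $g - g(\xi) = \psi^{n+1}$ (in particular the smoothness of $h^{\alpha}$, which relies on $h > 0$) and the careful bookkeeping of the sign/branch of $\sigma$ on the two intervals; the remaining exponent arithmetic is routine once the representation $g^{-1} = \psi^{-1} \circ \sigma$ is established.
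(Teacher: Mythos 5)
Your proof is correct and follows essentially the same route as the paper: your factorization $g(x)-g(\xi)=(x-\xi)^{n+1}h(x)$ and auxiliary map $\psi(x)=(x-\xi)\,h(x)^{\alpha}$ are exactly the paper's $R_\xi(x)=(x-\xi)^{n+1}T_\xi(x)$ and $h_\xi$, and the representation $g^{-1}=\psi^{-1}\circ\sigma$ followed by the Fa\`a di Bruno formula with the exponent bound $\alpha|m|-p\geq\alpha-p$ (from $|m|\geq 1$) is precisely the paper's argument. The only cosmetic differences are that you localize near $\xi$ and track the signed branch $\sigma$ explicitly, whereas the paper works on all of $[\xi,b]$ (where $T_\xi>0$ throughout, so $h_\xi$ is globally invertible there) and disposes of the other side by a without-loss-of-generality reduction.
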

\begin{proof}
\igg{Without loss of generality} we assume $\xi < b$, \igg{that $g$ is
increasing in $[\xi,b]$} and we  
consider the case $\tau \in (\xi, b]$ 
only. (The case $ \tau \in [a,\xi)$
is completely analogous.)
By Taylor's theorem with integral remainder and \eqref{eq:statpointorden},  
\be \label{eq:intdeq} g(x) \ = \ g(\xi) + (x-\xi)g'(\xi)+\ldots+
\frac{(x-\xi)^{n}}{n!}g^{(n)}(\xi)+ R_{\xi}(x) \ = \ g(\xi) + R_{\xi}(x) ,
\ee
for all $x\in (\xi, b]$, where 
\be R_{\xi}(x) = \frac{1}{n!}
\int_{\xi}^{x} (x-t)^{n} g^{(n+1)}(t) \,{{\rm d}t}.\label{eq:remainder_def}\ee
With the change of variables $t \mapsto y = (t-\xi)/(x-\xi)$ 
in (\ref{eq:remainder_def}), we obtain
\be R_{\xi}(x) \ =  \ (x-\xi)^{n+1} T_\xi(x), \quad \text{where} \quad 
T_\xi(x) \ = \ \frac{1}{n!}  \int_{0}^{1} (1-y)^n
g^{(n+1)}(\xi+y(x-\xi)) \,{\rm d}y.\label{eq:remainder_simplified}\ee
Then, \igg{for all $x \in (\xi,b]$, $R_\xi(x) > 0 $ and  $T_\xi(x) > 0$.}  
Also, since $g\in C^{\infty}[a,b]$, we have  $T_{\xi} \in C^{\infty}[\xi,b]$ and  
$T_\xi(\xi)=\frac{1}{(n+1)!} g^{(n+1)}(\xi)>0$.

Now, recall $\alpha = 1/(n+1)$ and  define 
\begin{equation}\label{eq:def:hxi}
h_\xi(x) \ = \ \big( g(x)-g(\xi)\big)^{\alpha} \ = \ 
\big(R_\xi(x)\big)^{\alpha} \ = \ 
\big(T_\xi(x)\big)^{\alpha} (x-\xi)
.
\end{equation}
Then $h_\xi \in C^{\infty}(\xi,b]$ and,  for all 
$x \in (\xi, b]\  $, 
$h'_\xi(x) = \alpha  (g(x) - g(\xi))^{\alpha-1} g'(x) \ig{ > 0} \ $.   
Moreover since also  
$
h'_\xi(\xi)=\big(T_\xi(\xi)\big)^{\alpha}>0 $ it follows that  
$h_\xi'$ is positive valued on $[\xi,b]$ and so  
$h_\xi:[\xi,b]\to\R$ is invertible and  
$(h_\xi)^{-1} \in C^{\infty}[h_\xi(\xi), h_\xi(b)]$. Thus, inserting \eqref{eq:transform} (and $x = g^{-1}(\tau)$) into  \eqref{eq:def:hxi}, we have 
\begin{equation}
\label{eq:inv_formula}
g^{-1}(\tau)\ = \ x \ = \   h_\xi^{-1}((\tau-g(\xi))^{\alpha}).
\end{equation}

To prove the estimates \eqref{eq:est_on_inv_psi} we now   apply  
the Fa\`{a} di Bruno 
formula \eqref{eq:A} with $\phi = h_\xi^{-1}$ and 
$\psi(\tau) = (\tau - g(\xi))^\alpha$ 
to obtain derivatives of   \eqref{eq:inv_formula}. 
Consider any term in the resulting sum \eqref{eq:A}.  
Since $h_\xi^{-1}$ is smooth, the 
first factor in round brackets  is bounded, while 
the second factor  in round brackets  can be estimated by
\be
|\tau - g(\xi)|^{(\alpha - 1)m_1+ (\alpha-2)m_2 +\ldots+ (\alpha-p)m_p},
\label{eq:Cch4}
\ee
times a constant.
Recalling the remarks following \eqref{eq:A},  the index in (\ref{eq:Cch4}) is 
$\alpha \vert m \vert  - p \ \geq \ \alpha - p$, 
so \eqref{eq:est_on_inv_psi} follows. 
\end{proof}
 
\noindent 
{\em Proof of Theorem \ref{thm:derivofF}. }\ Again, without loss of generality, we work with  $\tau\in (g(\xi), g(b)]$.  
We first observe that since $g'$ is one-signed, so  is  
$(g^{-1})' $. Thus we can write  
$F = \pm  (f \circ g^{-1}) \left(g^{-1}\right)'$ 
and hence, by the Leibnitz rule, $F^{(p)}$  is a linear combination of
terms of the form 
\be 
\left(f\circ
g^{-1}\right)^{(l)} \left(g^{-1} \right)^{(p-l+1)}, \ \ \ l =
0,\ldots,p. \label{eq:D_leibnitz}\ee
Referring again to formula \eqref{eq:A},  and recalling that $f$ is smooth,   
the first term in \eqref{eq:D_leibnitz} may 
be estimated by   
a constant times $$\big\vert \prod_{j=1}^{l} ((g^{-1})^{(j)})^{m_j} \big\vert\ , $$
where $m_1 + 2m_2 + \ldots l m_l =  l$. 
Using the same argument as in the proof of Lemma \ref{lemma:psi_inv_est} this product has the estimate $\vert \tau - g(\xi)\vert ^{\alpha - l}$ 
(modulo a constant factor). 

Now, returning to the products \eqref{eq:D_leibnitz}, we see that 
for $l \neq 0$ each of these can be estimated (modulo a constant factor) by
\[ |\tau - g(\xi)|^{\alpha - l} |\tau - g(\xi)|^{\alpha - p+l-1} = 
|\tau - g(\xi)|^{2\alpha-p-1}\ . \] 
However, when $l = 0$  the bound is
$ |\tau - g(\xi)|^{\alpha-p-1}$ and since $\alpha > 0$,  the result 
\eqref{eq:F_1_deriv_est} follows. 
\hspace{0.5cm} 
$\Box$

\subsection{Accurate implementation}
\label{subsec:accuracy}
Now let us return to the computation of  \eqref{eq:capitalFdef}.  Under the assumption of 
Theorem \ref{thm:derivofF},   we write 
\begin{equation}
\label{eq:twoparts}
I_k^{[a,b]}(f,g) \ = \ \left(\int_{g(a)}^{g(\xi)} + \int_{g(\xi)}^{g(b)}\right) F(\tau) \exp({{\rm i}}k\tau) \rd\tau\ .
\end{equation}
Each of these two integrals can be transformed in an affine way to an integral 
over   $[0,1]$  so that  the singularity  is 
placed at the origin.  
The    composite FCC  algorithm given in \S \ref{section:algorithm} 
can then be applied, with  error estimates given by 
Theorem \ref{thm:beta_all}. 
For example,  consider the second integral in
\eqref{eq:twoparts}. Under the change of variable 
$\tau  = g(\xi) + c \widehat{x}$ where $c = g(b) - g(\xi)$ and $\wx \in [0,1]$
this becomes 
\begin{equation}\label{eq:troublesome}
I_{kc}^{[0,1]}(\wF) \quad \text{where} \quad   \wF(\wx) \ = \ {c}\exp({{\rm i}}kg(\xi)) F ( g(\xi) + c \wx)\ ,\end{equation} 
and, by  Theorem 
\ref{thm:derivofF},  
$\wF \in  C_\beta[0,1]$, with 
 $\beta = \alpha - 1 = -n/(n+1) \in (-1,0)$. 

In the implementation of 
the composite FCC rules for \eqref{eq:troublesome}
some care must be taken to  accurately  evaluate 
the integrand $F(g(\xi) + c \wx)$  at very small  arguments $\wx$   
(as arise in the case of  finely graded meshes).  
This is a  delicate matter since    
if $g(\xi)   \gg c \wx $,       
rounding error 
may pollute  the direct  calculation of $g(\xi) + c \wx$, in turn making $\widehat{F}(\widehat{x})$ inaccurate.  To solve 
this problem,  recall that $F$ 
 is  defined in  \eqref{eq:capitalFdef} in  terms of the   
composition of smooth functions $f$ and $g'$ with $g^{-1}$.  Our task is 
\igg{therefore} reduced to devising an  accurate evaluation of the quantity
$x\ := \ g^{-1}(g(\xi) + \epsilon)$ for small $\epsilon$.  

The required  $x$ is then a solution to the equation   $g(x) - g(\xi) = \epsilon$ and,  
recalling the proof of
Lemma  \ref{lemma:psi_inv_est}, we see that this is in turn equivalent to  
$(T_\xi(x))^\alpha (x-\xi) = \epsilon^\alpha$. Thus 
$x$ solves  the nonlinear parameter 
dependent problem 
$$
G(x,\epsilon) := (T_\xi(x))^\alpha (x - \xi) - \epsilon^\alpha {=0}\ . 
$$
Since $T_\xi(\xi)>0$ (see also the proof of Lemma \ref{lemma:psi_inv_est}), we have 
$G(\xi,0) = 0 \not = G_x(\xi,0)$ and so  
the Implicit Function Theorem implies that,  near 
$\epsilon = 0$, $x$ is a smooth function of $\epsilon^\alpha$  
and there exists a constant $C_1$ so that 
 $\vert x - \xi\vert \leq C_1 \epsilon^\alpha$, for small enough  $\epsilon$. 
Moreover $x$ is also a solution to the fixed point problem 
$$
x = \xi + \left(\frac{\epsilon}{T_\xi (x)}\right)^\alpha =: H(x)\ .
$$
Since $T_\xi(\xi)>0$ and $T_\xi$ is smooth in a neighbourhood of $\xi$,  
 it is easy to see that $H$ is Lipschitz in a ball centred on $\xi$ and 
its  Lipschitz constant is $C_2\epsilon^\alpha$ for some constant $C_2$. 
So for small enough $\epsilon$,  $x$ is the unique fixed point of $H$ and 
 fixed point iteration   converges. This suggests that for 
$\epsilon$ small,  a 
suitable approximation to 
$x$ can be chosen as 
$\tilde{x} \ :=\ H(\xi)$, with error 
$$\vert \tilde{x} -  x\vert \ = \ \vert H(\xi) - H(x) \vert \ \leq C_2 
\epsilon^{\alpha} \vert \xi - x\vert \ = \ \mathcal{O} (\epsilon^{2\alpha}) \ .$$
The approximation $\tilde{x}$ to  $g^{-1}(g(\xi) + \epsilon)$ when $\epsilon $ is small is used in the computations in \S \ref{subsec:BEM}.


\section{Numerical Experiments}
\label{sec:Numerical}

In this section, we first carry out some  numerical experiments 
which illustrate the convergence estimates of Theorem 
\ref{thm:beta_all} using computations of 
the model integral with linear oscillator:  
\be I_k^{[0,1]}(f_{\beta}) = \int_{0}^{1} f_{\beta}(x)
\exp({{\rm i}}kx)\, {\rm d}x, \ \ \ \ \ \  \label{eq:beta_int}\ee
for various $\beta\in(-1,1)$, with  $f_\beta$ defined in \eqref{eq:deffbeta} .
Then  we compute a model problem with a nonlinear oscillator motivated by the implementation of hybrid numerical-asymptotic boundary integral methods in high-frequency scattering. 

\subsection{Linear oscillator} 

\subsubsection*{\bf Experiment 1} \ \ 
Our first set of experiments studies the  case  
 $M \rightarrow \infty$, for  fixed $k$. 
From Theorem \ref{thm:beta_all} with $r = 0$,  
we see that in this case  the composite  
FCC rule for  \eqref{eq:beta_int} 
should converge with order 
$\mathcal{O}(M^{-(N+1)})$ as $M \rightarrow \infty$ provided $q > (N+1)/(\beta+1)$
for $\beta \not = 0$.  When $\beta = 0$ an additional factor of  
$\log M$ appears in the estimate. 
To illustrate this result we compute the errors $E_{k,N,M,q}(f_\beta)$  for 
$k = 1000$ with   
various $N$ and  $q = (N+1)/(\beta+1) + 0.1$ as $M$ increases.   
The exact value of \eqref{eq:beta_int} can be computed analytically and so the errors can be found exactly. 
The results for the three values  $\beta = 1/2,0,-1/4$  
are given in the three sub-tables in   
Table \ref{tab:fixedk}. The columns headed ``{\tt error}'' contain the values of
$E_{\igg{k,N,M,q}}(f_\beta)$
while the columns headed ``{\tt ratio}'' 
contain the empirical convergence rates with respect to $M$ computed 
by extrapolation 
The expected convergence rate is $N+1$ (modulo a log factor when  $\beta = 0$)
and this is given in the row marked ``{\tt expected ratio}''.  
In all cases the 
empirical convergence rate is  close to  the  
predicted rate, except when the error has almost reached machine precision  
in which case, naturally,   rather unsteady empirical convergence rates are  obtained. 
It is worth noting that in this computation some of the subintervals in 
the composite rule are very small, in fact with $N=8$ and $M = 64$ the smallest 
subinterval of the mesh is  of size about $10^{-34}$. 
Nevertheless the 
algorithm appears to show no instability and converges to machine
precision as $M$ increases. 
\begin{table}[hbt]
\centering
{\footnotesize \tt \addtolength{\tabcolsep}{-2pt}
\begin{tabular}{|c||c|c|c|c|c|c|} \hline 
$\beta = 1/2$ &\multicolumn{2}{|c|}{$N = 4$}&
\multicolumn{2}{|c|}{$N = 6$}& \multicolumn{2}{|c|}{$N = 8$}\\
expected ratio&\multicolumn{2}{|c|}{$5$}&
\multicolumn{2}{|c|}{$7$}&\multicolumn{2}{|c|}{$9$}\\
\hline
$M$ & error & ratio  & error & ratio  & 
 error & ratio   \\ 
\hline
  8 & 4.3e-006 &   & 5.2e-008 &    & 1.7e-009 &    \\ 
 16 & 9.5e-008 &   5.49 & 5.7e-010 &   6.50 & 6.6e-012 &   8.06 \\ 
 32 & 2.9e-009 &   5.03 & 2.0e-012 &   8.13 & 1.0e-014 &   9.30 \\ 
 64 & 8.1e-011 &   5.17 & 2.3e-014 &   6.50 & 1.3e-016 &   6.37 \\ 
  \hline
\end{tabular}}

\vspace{0.1cm}

{\footnotesize \tt \addtolength{\tabcolsep}{-2pt}
\begin{tabular}{|c||c|c|c|c|c|c|} \hline 
$\beta = 0$ &\multicolumn{2}{|c|}{$N = 4$}&
\multicolumn{2}{|c|}{$N = 6$}& \multicolumn{2}{|c|}{$N = 8$}\\
expected ratio&\multicolumn{2}{|c|}{$5$}&
\multicolumn{2}{|c|}{$7$}&\multicolumn{2}{|c|}{$9$}\\
\hline
$M$ & error & ratio  & error & ratio  & 
 error & ratio \\ 
\hline
  8 & 2.7e-004 &    & 7.9e-006 &   & 1.0e-006 &    \\ 
 16 & 1.0e-005 &   4.70 & 7.3e-008 &   6.77 & 2.2e-009 &   8.82 \\ 
 32 & 4.0e-007 &   4.67 & 7.4e-010 &   6.62 & 3.0e-012 &   9.53 \\ 
 64 & 1.4e-008 &   4.87 & 3.8e-012 &   7.59 & 1.9e-015 &  10.59 \\ 
     \hline
\end{tabular}}

\vspace{0.1cm}

{\footnotesize \tt \addtolength{\tabcolsep}{-2pt}
\begin{tabular}{|c||c|c|c|c|c|c|} \hline 
$\beta = -1/4$ &\multicolumn{2}{|c|}{$N = 4$}&
\multicolumn{2}{|c|}{$N = 6$}& \multicolumn{2}{|c|}{$N = 8$} \\expected ratio&\multicolumn{2}{|c|}{$5$}&
\multicolumn{2}{|c|}{$7$}&\multicolumn{2}{|c|}{$9$}\\
\hline
$M$ & error & ratio  & error & ratio  & 
 error & ratio   \\ 
\hline
  8 & 4.5e-005 &    & 1.6e-005 &    & 6.0e-006 &    \\ 
 16 & 2.6e-006 &   4.10 & 8.0e-008 &   7.62 & 2.0e-008 &   8.22 \\ 
 32 & 1.9e-008 &   7.15 & 9.3e-010 &   6.43 & 1.1e-011 &  10.91 \\ 
 64 & 1.9e-009 &   3.33 & 3.9e-012 &   7.88 & 2.9e-014 &   8.51 \\ 
\hline
\end{tabular}}
\caption{ \label{tab:fixedk} Numerical Results for Experiment 1}
\end{table}

\begin{table*}[htbp]
\centering
{\footnotesize \tt \addtolength{\tabcolsep}{-2pt}
\begin{tabular}{|c||c|c|c|c|c|c|c|c|} \hline   
&\multicolumn{2}{c|}{$\beta = 1/8$}&\multicolumn{2}{c|}{$\beta = 1/4$}& \multicolumn{2}{c|}{$\beta = 1/2$ } &
\multicolumn{2}{c|}{$\beta = 3/4$ }\\ best expected ratio & \multicolumn{2}{|c|}{
0.86}&\multicolumn{2}{|c|}{ 1.00}& \multicolumn{2}{|c|}{ 1.27} & \multicolumn{2}{|c|}{1.55 }\\   \hline    
$k_i$ & error & ratio &  error & ratio &  error & ratio &
 error & ratio  \\
\hline
 $10^3$ & 4.9e-006 &   & 4.0e-006 &    & 1.2e-006 &    & 2.2e-007 &    \\ 
 $10^4$ & 4.7e-007 &   1.02 & 2.7e-007 &   1.16 & 4.5e-008 &   1.43 & 4.5e-009 &   1.68 \\ 
 $10^5$ & 5.7e-008 &   0.91 & 2.6e-008 &   1.03 & 2.3e-009 &   1.29 & 1.1e-010 &   1.62 \\ 
 $10^6$ & 1.2e-008 &   0.68 & 3.8e-009 &   0.83 & 1.8e-010 &   1.11 & 4.9e-012 &   1.35 \\ 
 $10^7$ & 1.3e-009 &   0.95 & 2.5e-010 &   1.18 & 4.4e-012 &   1.60 & 7.1e-014 &   1.84 \\ 
 \hline
\end{tabular}}

\vspace{0.1cm}

{\footnotesize \tt \addtolength{\tabcolsep}{-2pt}
\begin{tabular}{|c||c|c|c|c|c|c|c|c|} \hline 
&\multicolumn{2}{|c|}{$\beta = -1/16$}&
\multicolumn{2}{|c|}{$\beta =- 1/8$}& \multicolumn{2}{|c|}{$\beta = -1/4$}
&\multicolumn{2}{|c|}{$\beta = -1/2$} \\   
best expected ratio &\multicolumn{2}{|c|}{$ 0.96$}& 
\multicolumn{2}{|c|}{$0.59$}&\multicolumn{2}{|c|}{$0.45$}
&\multicolumn{2}{|c|}{$0.18$} \\ 

\hline
$k$ & error & ratio & error & ratio  &  error & ratio  & error & ratio  \\ 
\hline
  $10^3$ & 9.3e-006 &    & 2.9e-005 &    & 1.4e-004 &    & 1.6e-003 &    \\ 
  $10^4$ & 1.5e-006 &   0.81 & 5.4e-006 &   0.73 & 3.7e-005 &   0.57 & 1.2e-003 &   0.13 \\ 
  $10^5$ & 2.5e-007 &   0.76 & 1.0e-006 &   0.71 & 8.6e-006 &   0.64 & 4.8e-004 &   0.39 \\ 
  $10^6$ & 9.0e-008 &   0.45 & 4.4e-007 &   0.37 & 5.1e-006 &   0.23 & 3.4e-004 &   0.14 \\ 
  $10^7$ & 2.3e-008 &   0.60 & 1.5e-007 &   0.48 & 3.1e-006 &   0.22 & 8.0e-004 &  -0.37 \\ 
      \hline
\end{tabular}}
\caption{\label{tab:kvarying} Numerical Results for Experiment 2}
\end{table*}

\subsubsection*{Experiment 2} \ \
Here we fix $M=10$, $N=3$ and $q=12$ and we study convergence as  
$k$ increases,  for various  $\beta$. 
In Table \ref{tab:kvarying},   
the columns headed ``{\tt ratio}'' contain the empirical convergence 
rates with respect to $k$ computed by extrapolation.  
From Theorem \ref{thm:beta_all},
 we see that the composite FCC rule for \eqref{eq:beta_int} should converge with order $\mathcal{O}(k^{-r})$ as $k \rightarrow \infty$ where $r < (q(\beta+1) - N-1)/(q-1)$. In the row marked ``{\tt best expected ratio}'' 
this upper bound on $r$ is given for each $\beta$, using our choice of $N, q$. 
We see from Table \ref{tab:kvarying} that the empirical convergence rate as 
$k$ increases for   $\beta>0$ is close to  the  theoretically 
predicted best rate of convergence. When  $\beta<0$, 
the empirical rates are a bit slower that the theoretical best rate. 

\subsubsection*{Experiment  3} \ \ 
In Table
\ref{tab:kvarlog}    we study the     computation   of
\eqref{eq:beta_int} with $f_{\beta}(x) = \log x$, for $M=12$ and $N=3$
as $k$  increases for various $q$.  
 For each  value of  $q$  and  $N$, the  error of the  composite FCC
rule should converge with order  $\mathcal{O}(k^{-r})$, 
with $r < (q - N-1)/(q-1)$. The row  marked ``{\tt best  expected ratio}''
contains the upper bound on $r$ while the   
columns marked  ``{\tt ratio}'' contain the empirical 
convergence rates .   The table shows that when when $q =
12$ the  empirical    convergence rate is close the  the theoretical 
best rate.  When $q = 4$, the  empirical convergence rate is
even better than  the best expected rate which  in this case indicates
that no convergence should be observed at all. On the other hand, when
$q  =  8$  and  $q   =  16$  rather  unsteady  convergence  rates  are
obtained. However, when  $q = 8$ as $k$  increases the empirical rates
become  bounded by  the best  expected rate,  while for  $q =  16$ the
empirical rates are either bounded  by or are slightly better than the
the best expected rate.  
\begin{table}[htbp]
\centering
{\footnotesize \tt \addtolength{\tabcolsep}{-2pt}
\begin{tabular}{|c||c|c|c|c|c|c|c|c|} \hline 
 &\multicolumn{2}{|c|}{}& \multicolumn{2}{|c|}{} &
\multicolumn{2}{|c|}{}& \multicolumn{2}{|c|}{} \\ 
& \multicolumn{2}{|c|}{$q = 4$}
&\multicolumn{2}{|c|}{$q = 8$}&
\multicolumn{2}{|c|}{$q = 12$}& \multicolumn{2}{|c|}{$q = 16$}
\\   
best expected ratio &\multicolumn{2}{|c|}{$0$} &\multicolumn{2}{|c|}{$0.57$}&
\multicolumn{2}{|c|}{$0.73$}&\multicolumn{2}{|c|}{$0.80$}
\\ 
\hline
$k$ & error & ratio & error & ratio & 
 error & ratio & error & ratio \\ 
\hline
 && && && && \\    
 $10^1$  & 5.5e-004 &   0.00 & 1.5e-004 &   0.00 & 1.1e-003 &   0.00 & 3.6e-003 &   0.00 \\ 
 $10^2$  & 5.2e-004 &   0.02 & 5.6e-005 &   0.42 & 2.2e-004 &   0.69 & 3.5e-004 &   1.01 \\ 
 $10^3$  & 5.2e-004 &   0.00 & 3.3e-005 &   0.24 & 3.8e-005 &   0.75 & 1.0e-004 &   0.53 \\ 
 $10^4$  & 5.0e-004 &   0.02 & 6.7e-006 &   0.69 & 7.0e-006 &   0.74 & 8.4e-006 &   1.09 \\ 
 $10^5$  & 1.4e-004 &   0.55 & 9.1e-007 &   0.87 & 1.1e-006 &   0.79 & 1.9e-006 &   0.64 \\ 
 $10^6$  & 2.0e-005 &   0.84 & 3.9e-007 &   0.37 & 2.0e-007 &   0.74 & 2.4e-007 &   0.91 \\ 
 $10^7$  & 1.9e-006 &   1.04 & 1.3e-007 &   0.47 & 5.1e-008 &   0.60 & 8.5e-008 &   0.45 \\
 \hline
\end{tabular}}
\caption{  \label{tab:kvarlog} Numerical results for Experiment 3}
\end{table} 

\subsubsection*{Experiment 4} \ \ 
Here we  illustrate the power of the composite FCC rule compared to   
the non-composite version for computing \eqref{eq:beta_int} when  
$\beta = 1/2$. 
The parameter $N$ for the non-composite FCC rule takes values $N_i = \{24 \times 2^{i}, i = 0,1,2,3\}$, while for the composite rule, we fix parameters $q = 12$ and $M = 6$ and take $N_i = \{4 \times 2^{i}, i = 0,1,2,3\}$. The total number of function evaluations in both cases is therefore the same. 
The  superiority of the composite version is clearly seen. 

\begin{table}[htbp]
\centering
{\footnotesize \tt \addtolength{\tabcolsep}{-2pt}
\begin{tabular}{|c||c|c||c||c|c|} \hline  
&\multicolumn{2}{|c||}{non-composite FCC }&& \multicolumn{2}{|c|}{composite FCC }
\\
&\multicolumn{2}{|c||}{$M= 1$, $q = 1$}&& \multicolumn{2}{|c|}{$M= 6$, $q = 12$} \\
\hline
$N$&$k = 400$&$k = 1600$& $M \times N$& $k = 400$  &$k = 1600$ \\   \hline    
 24 & 9.2e-004  & 4.5e-005  & 6$\times$4  & 1.5e-005  & 1.0e-006 \\
48 & 5.9e-004  & 4.4e-005  & 6$\times$8  & 8.4e-007  & 2.3e-007 \\
96 & 1.8e-004  & 4.2e-005  & 6$\times$16 & 1.5e-008  & 1.5e-008 \\
192& 9.7e-005  & 2.6e-005  & 6$\times$32 & 5.5e-012  & 3.3e-009 \\
\hline
\end{tabular}} 
\caption{\label{tab:experiment4} Numerical results for Experiment 4}
\end{table}

\subsection{An example from boundary integral methods in high-frequency scattering}
\label{subsec:BEM} 
{


Finally,  we describe an application to the 
computation of acoustic scattering at high frequency by numerical-asymptotic methods. 
When an   incident  plane  wave   $\exp({\rm i}k
\bm{x}.\widehat{{\bf d}})$ is scattered by a smooth convex  sound-soft obstacle 
with boundary $\Gamma$, the scattered field can be computed by 
solving the integral equation
\[
 \int_{\Gamma}\frac{\rm i}{4} H_0^{(1)}(k|{\bf x}-{\bf y}|)v({\bf y})\,{\rm
d}s({\bf y})=\exp({\rm i}k
\bf{x}.\widehat{{\bf d}})\ , \ {\bf x} \in \Gamma. 
\]
where $H_0^{(1)}$ is the Hankel function of the first kind of order $0$. 
(In fact this problem is not well-posed for all values of $k$ and in practice a 
related ``combined potential'' formulation is used. However this formulation  
illustrates the essential quadrature challenge which arises 
in all formulations. ) 
Even for moderate values of $k$ is useful to apply the ``physical optics approximation'' which amounts to writing   
$
v({\bf y})=V({\bf y})\exp({{\rm i}}k{\bf
  y}.\widehat{\bf d}). 
$
and computing the less oscillatory component  $V$ rather than the highly oscillatory  $v$ - see \cite{DoGrSm:07}, \cite{ChGrLaSp:12}.

Using the fact that 
$H_0^{1} (kr)\exp(-{{\rm i}}kr)$ 
is a non-oscillatory function, 
smooth for $r>0$ but with a logarithmic singularity at $r=0$ 
and introducing   a smooth parameterization ${\bf
x}:[0,2\pi]\to\Gamma$,  the
problem above can be reformulated, \igg{for $s \in [0, 2\pi]$} as
\begin{equation}\label{eq:01:BEM}
 \int_{0}^{2\pi}  M_k(s,t)\exp({{\rm i}}k\Psi_{[s]}(t)) V(t)\,{\rm
d}t=1, \quad \Psi_{[s]}(t):=|{\bf x}(s)- {\bf x}(t)|-({\bf x}(s)-{\bf
x}(t)) . \widehat{\bf d}\ .
\end{equation}
The function $M_k(s,t)$ is non-oscillating and smooth 
except as $t=s$ where a logarithmic singularity occurs. (More details are in \cite{KimPhD:2011}.)  
It can be proved that 
when $s$ is chosen so that ${\bf x}(s)$ is in the ``illuminated'' part of 
$\Gamma$ (where the incident waves hits the obstacle),  there is only 
one 
stationary point,
i.e., there exists a unique 
$t$ so that $\big(\Psi_{[s]}\big)'(t)=0$. Moreover, ${\bf x}(t)$ is a
point in the shadow part. Conversely, if ${\bf x}(s)$ lies in the shadow, we
have three stationary
points, with two of them in the shadow and one in the illuminated part. 
Thus \eqref{eq:01:BEM} is a good example for application of  the methods of \S 
\ref{sec:Composite_Stationary}. 

As an illustration we compute  the integral  in \eqref{eq:01:BEM} 
when $\Gamma$ is the unit circle, $\mathbf{x}(s) = (\cos s , \sin s)$ 
and set $\widehat{\bf d}=(1,0)$ so that 
\[
\Psi_{[s]}(t)=2\Big|\sin\left(\frac{s-t}{2}\right)\Big|-\cos s+\cos t. 
\]
The integral is computed using the following  strategy.  First,  
$[0,2\pi]$ is divided into subintervals so that each of them contains 
at most one  point which is either the singular point $s$ or a
stationary point. Next, any subinterval of length greater than one is split
into \igg{two subintervals of equal length} and  this process is continued  
until we obtain a subdivision of $[0,2\pi]$ in say $J$ subintervals of length smaller than 1. This is done to avoid working with
graded meshes on relatively long intervals. Then, with the change of
variable $\tau=\Psi_{[s]}(t)$, we have to approximate the integral
\[
\int_{\Psi_{[s]}(a_j)}^{\Psi_{[s]}(b_j)}  F_{s}(\tau)\exp({\rm
i}k\tau)\:{\rm d}\tau, \quad 
 F_{s}(\tau):=M_k(s,\Psi_{[s]}^{-1}(\tau) )
V(\Psi_{[s]}^{-1}(\tau))\frac{1}{(\Psi_\igg{[s]})'(\Psi_\igg{[s]}^{-1}(\tau))}, \quad j=1,\ldots,J.
\]
Given $L$ a positive integer, we compute the approximation of these integrals as
follows: If $[a_j,b_j]$ does not contain $s$ or a stationary point, the  function
\igg{$F_s$} is smooth, and therefore we can apply the simple 
{Filon} Clenshaw-Curtis
rule with $\min\{L+1,129\}$ points. If 
$s\in[a_j,b_j]$,  after an affine change of variables, 
$F_s$ belongs to ${\cal C}_0^ \infty[0,1]$. On the other hand, if either  $a_j$ or $b_j$ is a stationary point of order $n\ge 1$,  \ig{$ {F\in} {\cal C}_{-n/(n+1)}^ \infty[0,1]$} cf. Theorem \ref{thm:derivofF}. In both cases, we introduce  meshes appropriately graded towards the singularity, according Theorem \ref{thm:beta_all} with
$L$ subintervals and apply the composite FCC rule with $N+1$
points. 
In our experiment we have taken  $V\equiv 1$ and $s=3\pi/4$, which corresponds
to a point in the illuminated part. The only stationary point   $t=23\pi/24$
is of order $1$. 

In Table \ref{table:BEM:01} we display the results obtained with $N=6$, 
\igg{$q=(N+1)/(1+\beta)+1/10$}, which corresponds to $r=0$ in Theorem \ref{thm:beta_all}. Here  $\beta=0$ or $-1/2$ depending on the singularity of the integrand. 
From Theorem \ref{thm:beta_all} we can expect that the error decreases with  
as ${\cal O}(L^{-7})$ {(but there is no predicted decay with
  respect to  $k$ in this case).}
In Table \ref{table:BEM:02}, we give results for  $N=4$ and 
$q=(N+1)/(1+\beta-1/4)+1/10$, 
leading  to the error estimate   ${\cal
O}(k^{-1/4}L^{-4.75})$. Although the convergence in Table \ref{table:BEM:02}  
is irregular,  in contrast to 
Table \ref{table:BEM:01}, the error decreases with  $k$ as predicted by the theory.

In this experiment over   90\% of the CPU time  is spent in  computing the
change of variable, since any evaluation in $\tau$ requires the solution of  a
non-linear equation.  This is carried out in  our implementation using
the {\tt fzero}  of Matlab. 
In
the case  that \eqref{eq:01:BEM}  has to  be computed  for  many different
functions $V$,  for instance  in assembling the  matrix of  a Boundary
Element  Method,  these  calculations  have  to  be  done  only  once,
resulting in a  speed up of the  the method. We run our  programs in a
modest laptop, a Core 2 Duo with  4Gb of Ram memory, and show in Table
\ref{table:BEM:03} the CPU time  required to construct some columns of
Table \ref{table:BEM:01}.  We clearly see  that the cost is  linear in
$L$.

 }
\begin{table}[htbp]
\centering
{\footnotesize \tt \addtolength{\tabcolsep}{-2pt}
\begin{tabular}{|c||c|c|c|c|c|c|c|c|c|c|} 
\hline 
 &\multicolumn{2}{|c|}{}& \multicolumn{2}{|c|}{} &
\multicolumn{2}{|c|}{} & \multicolumn{2}{|c|}{} &
\multicolumn{2}{|c|}{} \\
&\multicolumn{2}{|c|}{$k = 10$}&
\multicolumn{2}{|c|}{$k = 100$}& \multicolumn{2}{|c|}{$k = 1,000$}
&\multicolumn{2}{|c|}{$k = 10,000$} &\multicolumn{2}{|c|}{$k = 100,000$}\\   
 &\multicolumn{2}{|c|}{}& \multicolumn{2}{|c|}{} &
\multicolumn{2}{|c|}{} & \multicolumn{2}{|c|}{} &
\multicolumn{2}{|c|}{}  \\ 
\hline &&&&&&&&& &\\
$L$ & error & ratio & error & ratio & 
 error & ratio & error & ratio & error & ratio  \\ 
\hline
 && && && && &&\\
12 &  4.5e-07  &       &  2.4e-07  &       & 1.2e-07  &     &  1.6e-08 &     
  &7.9e-08 &\\
24 &  5.0e-09  &  6.5  &  1.0e-09  &  7.9  & 1.9e-09  & 6.0 &  1.3e-09 & 3.6 &
 1.0e-09 & 6.2 \\
48 &  4.6e-11  &  6.8  &  5.1e-12  &  7.6  & 1.4e-11  & 7.1 &  8.1e-12 & 7.4 &
9.0e-12 &   6.9 \\
96 &  2.3e-13  &  7.6  &  1.1e-13  &  5.5  & 1.3e-13  & 6.8 &  1.0e-13 & 6.3 &
4.4e-13  &   4.4\\
192&  8.3e-15  &  4.8  &  6.4e-15  &  4.1  & 5.1e-15  & 4.6 &  1.1e-15 & 6.6&
 1.1e-15  &    8.6\\
\hline
\end{tabular}}
\caption{\label{table:BEM:01}\ig{Results for 
$N=6, M=L$ and  $q=(N+1)$ for  intervals with log singularities and
$q=(N+1)/(1-1/2)$ for integrals having a stationary point (so $r=0$)}. }
\end{table}

\begin{table}[htbp]
\centering
{\footnotesize \tt \addtolength{\tabcolsep}{-2pt}
\begin{tabular}{|c||c|c|c|c|c|c|c|c|c|c|} 
\hline 
 &\multicolumn{2}{|c|}{}& \multicolumn{2}{|c|}{} &
\multicolumn{2}{|c|}{} & \multicolumn{2}{|c|}{} &
\multicolumn{2}{|c|}{} \\
&\multicolumn{2}{|c|}{$k = 10$}&
\multicolumn{2}{|c|}{$k = 100$}& \multicolumn{2}{|c|}{$k = 1,000$}
&\multicolumn{2}{|c|}{$k = 10,000$} &\multicolumn{2}{|c|}{$k = 100,000$}\\   
 &\multicolumn{2}{|c|}{}& \multicolumn{2}{|c|}{} &
\multicolumn{2}{|c|}{} & \multicolumn{2}{|c|}{} &
\multicolumn{2}{|c|}{}  \\ 
\hline &&&&&&&&& &\\
$L$ & error & ratio & error & ratio & 
 error & ratio & error & ratio & error & ratio  \\ 
\hline
 && && && && &&\\  
12&4.7e-05 &     & 1.9e-05 &     & 2.4e-06 &     & 8.2e-07 &    & 1.8e-07 &\\
24&6.0e-07 &6.3 & 1.7e-07 & 6.8 & 1.2e-07 &  4.3& 1.4e-08 & 5.9& 1.0e-08 &4.1\\
48&1.2e-08 &5.7 & 1.4e-08 & 3.6 & 3.8e-09 &  5.0& 2.4e-10 & 5.9& 2.0e-10 &5.7\\
96&3.1e-10 &5.2 & 3.9e-10 & 5.1 & 5.3e-11 &  6.2& 7.0e-12 & 5.1& 1.9e-12 &6.7\\
192&1.4e-11&4.5 & 3.3e-11 & 3.6 & 2.8e-11 &  0.9& 3.7e-12 & 0.9& 3.0e-13 &2.7\\
\hline
\end{tabular}}
\caption{\label{table:BEM:02} \ig{Results for  $N=4$,
$M=L$ and $q=(N+1)/(1-1/4)$ for intervals with log singularities and
$q=(N+1)/(1-1/2-1/4) $ for intervals having a stationary point (so $r=1/4$).} 
}
\end{table}

\begin{table}[htbp]
\centering
{\footnotesize \tt \addtolength{\tabcolsep}{-2pt}
\begin{tabular}{|c|c|c|c|} 
\hline &&& \\
L &\parbox{2cm}{\centerline{$k=10$}} &\parbox{2cm}{\centerline{$k=1000$}}  & \parbox{2cm}{\centerline{$k=10,000$}}   \\ 
\hline & & & \\
12     & 0.77''&0.83'' &0.78''\\
24     &1.41'' &1.34'' &1.45''\\
48     &2.44'' &2.49'' &2.42''\\
96     &4.50'' &4.64'' &4.48''\\
192    &8.48'' &8.46'' &8.37''\\
\hline  
\end{tabular}}
\caption{\label{table:BEM:03} CPU time consumed for computing Table \ref{table:BEM:01} }
\end{table}

\paragraph{Acknowledgement} The first author is partially supported by
Project MTM2010-21037 (Spain)  and EPSRC Grant
EP/F06795X/1 (UK). The second author was supported by EPSRC Grant
EP/F06795X/1. The third author was supported by a Postgraduate
Studentship from the University of Bath and EPSRC Grant
EP/F06795X/1.


\pagebreak

\bibliographystyle{plain}

\begin{thebibliography}{10}

\bibitem{AbrSt}
M.~Abramowitz and I.A. Stegun.
\newblock { Handbook of {M}athematical {F}unctions}, 
Dover, New York, 1965. 

\bibitem{ChGrLaSp:12} S.N. Chandler-Wilde, I.G.Graham, S. Langdon and 
E.A. Spence, Hybrid numerical-asymptotic boundary integral methods for 
high-frequency acoustic scattering. 
{\em Acta Numerica} 21: 89 - 305, 2012.  

\bibitem{Do:code} \ig{V.~Dom{\'{\i}}nguez}, Public domain code,
\url{http://www.unavarra.es/personal/victor_dominguez/clenshawcurtisrule}.


\bibitem{DoGrSm:07}
V.~Dom{\'{\i}}nguez, I.G. Graham, and V.P. Smyshlyaev.
\newblock A hybrid numerical-asymptotic boundary integral method for
  high-frequency acoustic scattering.
\newblock {\em Numer. Math.}, 106:471--510, 2007.


\bibitem{DoGrSm:11}
V.~Dom{\'{\i}}nguez, I.G. Graham, and V.P. Smyshlyaev.
\newblock Stability and error estimates for Filon-Clenshaw-Curtis
rules for highly-oscillatory integrals, 
\newblock {\em IMA J.  Numer.  Anal.},  
doi: 10.1093/imanum/drq036, 2011.  

\bibitem{GradRyz:1994}
I. S. Gradshteyn and I. M. Ryzhik.
\newblock Table of Integrals, Series, and Products,
\newblock Academic Press, New York, 1994. 


\bibitem{GrMe:89}
I.G.Graham and W.R. Mendes, Nystr\"{o}m-product integration for Wiener-Hopf
equations with applications to radiative transfer, IMA
J. Numer. Anal. 9:261-284, 
1989.


 \bibitem{HuVa:06} D. Huybrechs and S. Vandewalle, 
 On the evaluation of highly oscillatory integrals by analytic continuation
{\em SIAM J. Numer. Anal.} 44:1026-1048, 2006.

 \bibitem{HuOl:12} D. Huybrechs and S. Olver, 
 Superinterpolation in highly oscillatory quadrature 
 {\em Foundations of Computational Mathematics} 12(2):203-228, 2012.



 
 \bibitem{Is:04} A. Iserles, On the numerical
 quadrature of highly oscillating integrals I: Fourier transforms,  
 {\em IMA J.  Numer Anal} 24:365--391, 2004.

 \bibitem{Is:05} A.  Iserles, 
     On the numerical quadrature of highly-oscillating integrals II: Irregular oscillators
 {\em     IMA J Numer Anal} 2005: 25-44, 2005. 

 \bibitem{IsNo:05} A. Iserles and S. N{\o}rsett, Efficient quadrature
 of highly oscillatory integrals using derivatives, {\em Proceedings
   Royal Soc.}  461: 1383-1399, 2005. 


\bibitem{KimPhD:2011} T. Kim, Asymptotic and Numerical Methods in High-Frequency Scattering,
  {\em PhD Thesis, University of Bath}, 
   {2012}.


 \bibitem{Le:82} D. Levin, 
 Procedures for computing one- and two-dimensional integrals of
 functions with rapid irregular oscillations, {\em Math. Comput.} 38:
 531-538, 1982. 

\bibitem{Me:09} J.M. Melenk, On the convergence of Filon quadrature,
  {\em J. Comput. Appl. Math}, 
  {234}:{1692--1701},
   {2010}.
   



 \bibitem{Ol:07}
 S. Olver, Moment-free numerical approximation of highly-oscillatory
 integrals with stationary points {\em Euro. J. Appl. Math} 18:
 435-447, 2007.

 \bibitem{Ol:06} 
 S. Olver, 
 \newblock  GMRES for the differentiation operator, 
 \newblock   {\em Numer. Math.}, 114:607--628, 2010
 

 \bibitem{Ol:09}
 S. Olver, 
 \newblock Fast, numerically stable computation of oscillatory
 integrals with stationary points,
 \newblock {\em BIT Numer. Math.}, 50: 149 -- 171.  2010
  
\bibitem{Rice:69}
J. R. Rice, \newblock {On the degree of convergence of nonlinear spline approximations}, 
\newblock{Approximations with Special Emphasis on Spline functions}, (I. J. Schoenburg, Ed.) Academic Press, New York, 1969. 

\bibitem{Ro:80}
S. Roman, \newblock {The formula of Fa\`{a} di Bruno}, 
\newblock{American Mathematical Monthly} 87:805-809, 1980. 
 
 \bibitem{xi:07} S. Xiang, Efficient Filon-type methods for $\int_a^b
   f(x) e^{i\omega g(x)} dx $. {\em Numer. Math.} 105: 633-658, 2007. 

\end{thebibliography}

\end{document}